\newenvironment{example}{}{}
\newcommand\norm[2]{\left\Vert#1\right\Vert_{#2}}
\newcommand\abs[2]{\left\vert#1\right\vert_{#2}}
\newcommand\dual[3]{\left\langle #1, #2\right\rangle_{#3}}
\newcommand\linop[2]{\mathbb L\left[#1,#2\right]}
\newcommand\N{\mathbb{N}}
\newcommand\R{\mathbb{R}}
\newcommand\XX{\mathcal{X}}
\newcommand\YY{\mathcal{Y}}
\newcommand\ZZ{\mathcal{Z}}
\newcommand\WW{\mathcal{W}}
\newcommand\opA{\mathtt{A}}
\newcommand\opB{\mathtt{B}}
\newcommand\opS{\mathtt{S}}
\newcommand\mto{\rightrightarrows}
\newcommand{\cl}{\operatorname{cl}}
\newcommand{\bd}{\operatorname{bd}}
\newcommand{\conv}{\operatorname{conv}}
\newcommand{\cone}{\operatorname{cone}}
\newcommand{\dom}{\operatorname{dom}}
\newcommand{\gph}{\operatorname{gph}}
\newcommand{\weakly}{\rightharpoonup}
\theoremstyle{plain}% default
\newtheorem{theorem}{Theorem}[section]
\newtheorem{lemma}[theorem]{Lemma}
\newtheorem{definition}[theorem]{Definition}
\newtheorem{assumption}[theorem]{Assumption}
\newtheorem{remark}[theorem]{Remark}
\apptocmd\example{\ignorespaces}{}{}
\newcommand\doi[1]{\href{https://dx.doi.org/#1}{doi: \nolinkurl{#1}}}
\crefname{assumption}{assumption}{assumptions}
\Crefname{assumption}{Assumption}{Assumptions}
\begin{document}

\title*{Bilevel optimal control: existence results and stationarity conditions}
% Use \titlerunning{Short Title} for an abbreviated version of
% your contribution title if the original one is too long
\author{Patrick Mehlitz and Gerd Wachsmuth}
% Use \authorrunning{Short Title} for an abbreviated version of
% your contribution title if the original one is too long
\institute{
	Patrick Mehlitz \at 
	Brandenburgische Technische Universit\"at Cottbus Senftenberg, Chair of Optimal Control, 
	\email{mehlitz@b-tu.de}
		\and 
	Gerd Wachsmuth \at 
	Brandenburgische Technische Universit\"at Cottbus Senftenberg, Chair of Optimal Control,
	\email{wachsmuth@b-tu.de}
}
%
% Use the package "url.sty" to avoid
% problems with special characters
% used in your e-mail or web address
%
\maketitle

\abstract*{
	The mathematical modeling of numerous real-world applications 
	results in hierarchical optimization problems with two decision makers
	where at least one of them has to solve an optimal control problem of
	ordinary or partial differential equations.
	Such models are referred to as bilevel optimal control problems.
	Here, we first review some different features of bilevel optimal control
	including important applications, existence results, solution approaches,
	and optimality conditions.
	Afterwards, we focus on a specific problem class
	where parameters appearing in the objective functional of an optimal
	control problem of partial differential equations have to
	be reconstructed. After verifying the existence of solutions, necessary
	optimality conditions are derived by exploiting the optimal value function
	of the underlying parametric optimal control problem in the context of
	a relaxation approach. 
}

\abstract{
	The mathematical modeling of numerous real-world applications 
	results in hierarchical optimization problems with two decision makers
	where at least one of them has to solve an optimal control problem of
	ordinary or partial differential equations.
	Such models are referred to as bilevel optimal control problems.
	Here, we first review some different features of bilevel optimal control
	including important applications, existence results, solution approaches,
	and optimality conditions.
	Afterwards, we focus on a specific problem class
	where parameters appearing in the objective functional of an optimal
	control problem of partial differential equations have to
	be reconstructed. After verifying the existence of solutions, necessary
	optimality conditions are derived by exploiting the optimal value function
	of the underlying parametric optimal control problem in the context of
	a relaxation approach. 
}

\section{What is bilevel optimal control?}\label{sec:introduction}

A bilevel programming problem is a hierarchical optimization problem of two decision
makers where the objective functional as well as the feasible set of the so-called
upper level decision maker (or leader) depend implicitly on the solution set of a 
second parametric optimization problem which will be called lower level (or follower's)
problem. Both decision makers act as follows: First, the leader chooses an instance
from his feasible set which then serves as the parameter in the follower's problem.
Thus, the follower is in position to solve his problem and passes an optimal 
solution back to the leader who now may compute the associated value of the 
objective functional. As soon as the lower level solution set is not a singleton
for at least one value of the upper level variable, problems of this type may be ill-posed
which is why different solution concepts including the so-called optimistic and
pessimistic approach have been developed. Bilevel programming problems generally suffer
from inherent lacks of convexity, regularity, and smoothness which makes them
theoretically challenging.
The overall concept of bilevel 
optimization dates back to \cite{vonStackelberg1934} where this problem class
is introduced in the context of economical game theory. More than 80 years later, bilevel
programming is one of the hottest topics in mathematical optimization since numerous
real-world applications can be transferred into models of bilevel structure. 
A detailed introduction to bilevel programming can be found in the monographs
\cite{Bard1998,Dempe2002,DempeKalashnikovPerezValdesKalashnykova2015,ShimizuIshizukaBard1997}
while a satisfying overview of existing literature is given in \cite{Dempe2018}
where more than 1350 published books, PhD-theses, and research articles are listed.

Optimal control of ordinary or partial differential equations (ODEs and PDEs, respectively)
describes the task of identifying input quantities which control the state function
of the underlying differential equation such that a given cost functional is minimized,
see \cite{HinzePinnauUlbrichUlbrich2009,LewisVrabieSyrmos2012,Troeltzsch2009,Troutman1996}
for an introduction to this topic. Noting that the decision variables are elements of
suitable function spaces, optimal control is a particular field of programming in
(infinite-dimensional) Banach spaces, see \cite{BonnansShapiro2000}.

In bilevel optimal control, bilevel programming problems are considered where at least one
decision maker has to solve an optimal control problem. Thus, we are facing the intrinsic
difficulties of bilevel optimization \emph{and} optimal control when investigating this problem class.
Naturally, one may subdivide bilevel optimal control problems into three subclasses depending on
which decision maker has to perform optimal control. Each of these problem classes 
appears in practice and has to be tackled with different techniques in order to infer optimality
conditions or solution algorithms.

The situation where only the upper level decision maker has to solve an optimal control problem
of ordinary differential equations while the lower level problem explicitly depends on the terminal state
of the leader's state variable has been considered in \cite{BenitaDempeMehlitz2016,BenitaMehlitz2016}.
Problems of this type arise from the topic of gas balancing in energy networks, 
see \cite{KalashnikovBenitaMehlitz2015}, and can be investigated by combining tools from finite-dimensional
parametric optimization and standard optimal control.
The situation where parameters within an optimal control problem have to be estimated or reconstructed by
certain measurements is a typical example of a bilevel optimal control problem where only the lower
level decision maker has to solve an optimal control problem. This particular instance of bilevel optimal
control may therefore be also called inverse optimal control. 
In \cite{AlbrechtLeiboldUlbrich2012,AnbrechtPassenbergSobotkaPeerBussUlbrich2010,AlbrechtUlbrich2017,Hatz2014,
MombaurTruongLaumond2010},
inverse optimal control problems of ODEs are considered in the context of human locomotion.
Some more theoretical results for such problems are presented in \cite{HatzSchloederBock2012,Ye1995,Ye1997}.
First steps regarding the inverse optimal
control of PDEs have been done recently in the papers 
\cite{DempeHarderMehlitzWachsmuth2019,HarderWachsmuth2019,HollerKunischBarnard2018}.
The paper \cite{PalagachevGerdts2017} deals with the scheduling of multiple agents which are controlled
at the lower level stage. In \cite{FischLenzHolzapfelSachs2012}, the authors discuss a bilevel
optimal control problem where airplanes are controlled at multiple lower levels in order to increase the
fairness in air racing.
Finally, it is possible that leader and follower have to solve an optimal control problem.
This setting has been discussed theoretically in 
\cite{Carlson2013,Mehlitz2016,MehlitzWachsmuth2016,PalagachevGerdts2016}. 
Underlying applications arise e.g.\ when time-dependent coupling of container crane movements
is under consideration, see \cite{Knauer2012,KnauerBueskens2010}.

The optimal control of (quasi-) variational inequalities ((Q)VIs) seems to be closely related to the subject of
bilevel optimal control since the underlying variational problem, which assigns to each control the 
uniquely determined state function, can be modeled as a parametric optimization problem in function spaces. 
Those problems are of hierarchical structure, but neither leader nor
follower has to solve an optimal control problem in the classical meaning.
In the seminal work \cite{Mignot1976},
Mignot shows that the control-to-state map of an elliptic VI in the Sobolev space
$H_0^1(\Omega)$
is directionally differentiable, and
(in the absence of control constraints)
this leads to an optimality system of strong-stationarity-type.
If control constraints are present,
one typically uses a regularization approach for the derivation
of optimality conditions.
This idea dates back to \cite{Barbu1984}
and we refer to \cite{SchielaWachsmuth2013} for a modern treatment.
Finally, we would like to mention 
that a comparison of several optimality systems
and further references regarding this topic can be found in 
\cite{HarderWachsmuth2018a}.

\section{Notation and preliminaries}\label{sec:preliminaries}

Let us briefly recall some essentials of functional analysis we are going to exploit.
For a (real) Banach space $\XX$, $\norm{\cdot}{\XX}\colon\XX\to\R$ denotes its norm.
Furthermore, $\XX^\star$ represents the topological dual of $\XX$. 
We use $\dual{\cdot}{\cdot}{\XX}\colon\XX^\star\times\XX\to\R$ in order to denote
the associated dual pairing. For a sequence $\{x_k\}_{k\in\N}\subset\XX$ and some
point $\bar x\in \XX$, strong and weak convergence of $\{x_k\}_{k\in\N}$ to $\bar x$
will be represented by $x_k\to\bar x$ and $x_k\weakly\bar x$, respectively.
Recall that in a finite-dimensional Banach space $\XX$, the concepts of strong and
weak convergence coincide.
A functional $J\colon\XX\to\R$ is said to be weakly sequentially lower (upper)
semicontinuous at $\bar x$, whenever
\[
	x_k\weakly\bar x\,\Longrightarrow\,j(\bar x)\leq\liminf\limits_{k\to\infty}j(x_k)
	\qquad
	\left(x_k\weakly\bar x\,\Longrightarrow\,j(\bar x)\geq\limsup\limits_{k\to\infty}j(x_k)\right)
\]
holds
for all sequences $\{x_k\}_{k\in\N}\subset X$. We say that $j$ is weakly sequentially lower
(upper) semicontinuous if it possesses this property at each point from $\XX$.
It is well known that convex and continuous functionals are weakly sequentially
lower semicontinuous.
If the canonical embedding $\XX\ni x\mapsto\dual{\cdot}{x}{\XX}\in\XX^{\star\star}$
is an isomorphism, then $\XX$ is said to be reflexive.
The particular Banach space $\R^n$ is equipped with the Euclidean norm $\abs{\cdot}{2}$.
Furthermore, we use $x\cdot y$ to represent the Euclidean inner product in $\R^n$.

A set $A\subset\XX$ is said to be weakly sequentially closed whenever the weak limits of
all weakly convergent sequences from $A$ belong to $A$ as well. We note that closed
and convex sets are weakly sequentially closed. We call $A$ weakly sequentially compact
whenever each sequence from $A$ possesses a weakly convergent subsequence whose limit
belongs to $A$. Each bounded, closed, and convex subset of a reflexive Banach
space is weakly sequentially compact.

For a second Banach space $\YY$, $\linop{\XX}{\YY}$ is used to denote the Banach
space of all bounded linear operators mapping from $\XX$ to $\YY$. 
For $\opA\in\linop{\XX}{\YY}$, $\opA^\star\in\linop{\YY^\star}{\XX^\star}$ denotes
its adjoint.
If $\XX\subset\YY$ holds while the associated identity in $\linop{\XX}{\YY}$
is continuous, then $\XX$ is said to be continuously embedded into $\YY$ which
will be denoted by $\XX\hookrightarrow\YY$.
Whenever the identity is compact, the embedding $\XX\hookrightarrow\YY$ is called compact.
For a set-valued mapping $\Gamma\colon\XX\mto\YY$, 
$\gph\Gamma:=\{(x,y)\in \XX\times\YY\,|\,y\in\Gamma(x)\}$ and
$\dom\Gamma:=\{x\in\XX\,|\,\Gamma(x)\neq\varnothing\}$
represent the graph and the domain of $\Gamma$, respectively.

Let $A\subset \XX$ be nonempty and convex. Then, the closed, convex cone
\[
	A^\circ:=\left\{
		x^\star\in\XX^\star\,\middle|\,\forall x\in A\colon\;\dual{x^\star}{x}{\mathcal X}\leq 0
			\right\}
\]
is called the polar cone of $A$. For a fixed point $\bar x\in A$, 
$\mathcal N_A(\bar x):=(A-\{\bar x\})^\circ$
is referred to as the normal cone (in the sense of convex analysis) to $A$ at $\bar x$.
For the purpose of completeness, let us set $\mathcal N_A(\hat x):=\varnothing$
for all $\hat x\in\mathcal X\setminus A$.
Note that whenever $C\subset\XX$ is a closed, convex cone satisfying $\bar x\in C$,
then we have the relation 
$\mathcal N_C(\bar x)=C^\circ\cap\{x^\star\in\mathcal X^\star\,|\,\langle x^\star,x\rangle_{\XX}=0\}$.

Detailed information on the function spaces we are going to exploit can be found in
the monograph \cite{AdamsFournier2003}.

%Throughout this manuscript, $C>0$ denotes a generic constant.

\section{Bilevel programming in Banach spaces}

Let us consider the bilevel programming problem
\begin{equation}\label{eq:upper_level_abstract}\tag{BPP}
	\begin{split}
		F(x,z)&\,\to\,\min\limits_{x,z}\\
		x&\,\in\,X_\textup{ad}\\
		z&\,\in\,\Psi(x),
	\end{split}
\end{equation}
where $\Psi\colon\mathcal X\mto\mathcal Z$ is the solution mapping of the parametric optimization problem
\begin{equation}\label{eq:lower_level_abstract}
	\begin{split}
		f(x,z)&\,\to\,\min\limits_z\\
		z&\,\in\,\Gamma(x).
	\end{split}
\end{equation}
Note that we minimize the objective functional in \eqref{eq:upper_level_abstract} w.r.t.\ both
variables which is related to the so-called optimistic approach of bilevel programming.
In this section, we first want to discuss the existence of optimal solutions associated with
\eqref{eq:upper_level_abstract}. Afterwards, we briefly discuss possible approaches which can
be used to infer optimality conditions for this problem class.

\subsection{Existence theory}

In this section, we aim to characterize situations where \eqref{eq:upper_level_abstract}
possesses optimal solutions. 
Noting that compact sets are generally rare in infinite-dimensional spaces, one cannot rely
on classical existence results from bilevel programming. Indeed, compactness assumptions
on the feasible sets have to be relaxed in order to guarantee applicability of possible results.
As a consequence, we need to demand more restrictive properties than (lower semi-) continuity of
the appearing objective functionals in order to balance things in a reasonable way. 
One may check e.g.\ \cite{Jahn1996} for a detailed discussion of existence theory for 
optimization problems in Banach spaces. 
Particularly, it is presented that each weakly sequentially lower semicontinuous functional
achieves its minimum over a nonempty and weakly sequentially compact set.
The above remarks justify the subsequently stated general assumptions of this section.
\begin{assumption}\label{ass:abstract_model}
	We consider Banach spaces $\XX$ and $\ZZ$.
	The objective functionals $F,f\colon\XX\times\ZZ\to\R$ are weakly sequentially lower semicontinuous.
	The set $X_\textup{ad}\subset\XX$ is assumed to be nonempty and weakly sequentially compact.
	Furthermore, $\Gamma\colon\XX\mto\ZZ$ is a set-valued mapping with $X_\textup{ad}\subset\dom\Gamma$
	such that $(X_\textup{ad}\times\ZZ)\cap\gph\Gamma$ is weakly sequentially compact.
\end{assumption}

In the setting where $\XX$ and $\ZZ$ are finite-dimensional, e.g.\ instances of $\R^n$, the
above assumptions reduce to the lower semicontinuity of the objective functionals as well as
some compactness assumptions on $X_\textup{ad}$ and $\gph\Gamma$ which is rather
standard in bilevel programming, see e.g.\ \cite{Dempe2002}. 
For our upcoming analysis, we will exploit the function $\varphi\colon X_\textup{ad}\to\overline\R$
defined by
\begin{equation}\label{eq:value_function}
	\forall x\in X_\textup{ad}\colon\quad
		\varphi(x):=\inf\limits_z\{f(x,z)\,|\,z\in\Gamma(x)\}.
\end{equation}
By definition, $\varphi$ assigns to each parameter $x\in X_\textup{ad}$ the optimal function value
of the lower level problem \eqref{eq:lower_level_abstract}. 
\Cref{ass:abstract_model} guarantees that the infimal  
value $\varphi(x)$ is actually attained (i.e.\ $\Psi(x)\neq\varnothing$) since for all $x\in X_\text{ad}$, 
$f(x,\cdot)\colon\ZZ\to\R$ is weakly sequentially lower semicontinuous while $\Gamma(x)$
is nonempty and weakly sequentially compact.

Below, we need to study the (upper) semicontinuity properties of $\varphi$.
In order to do that, we need to address some continuity properties of the mapping $\Gamma$,
see \cite{HerzogSchmidt2012}.
\begin{definition}\label{def:isc}
	Fix $(\bar x,\bar z)\in\gph\Gamma$. Then, $\Gamma$ is called
	inner semicontinuous (weakly-weakly inner semicontinuous) at $(\bar x,\bar z)$ 
	if for each sequence $\{x_k\}_{k\in\N}\subset\dom\Gamma$ satisfying $x_k\to\bar x$ ($x_k\weakly\bar x$),
	there exists a sequence $\{z_k\}_{k\in\N}\subset\ZZ$ satisfying
	$z_k\in\Gamma(x_k)$ for all $k\in\N$ as well as $z_k\to\bar z$ ($z_k\weakly\bar z$).
\end{definition}

It needs to be noted that the concepts of inner and lower semicontinuity of set-valued mappings,
see \cite{BankGuddatKlatteKummerTammer1983}, are closely related. 
Particularly, the lower semicontinuity of $\Gamma$ at some
point $\bar x\in\dom\Gamma$ is equivalent to its inner semicontinuity at all points $(\bar x,z)$ with $z\in\Gamma(\bar x)$.

In the particular situation where the mapping $\Gamma$ is characterized via smooth generalized inequalities,
there is an easy criterion which is sufficient for inner semicontinuity.
\begin{remark}\label{rem:lower_level_abstract_inequality_constraints}
	We assume that there exists a continuously Fr\'{e}chet differentiable function $g\colon\XX\times\ZZ\to\WW$,
	where $\WW$ is a Banach space, and some nonempty, closed, convex set $C\subset\WW$ such that $\Gamma$ is given by
	\[
		\forall x\in\XX\colon\quad
		\Gamma(x):=\{z\in\ZZ\,|\,g(x,z)\in C\}.
	\]
	For fixed $\bar z\in\Gamma(\bar x)$, we assume that the condition
	\begin{equation}\label{eq:RCQ}
		g'_z(\bar x,\bar z)\ZZ-\cone(C-\{g(\bar x,\bar z)\})=\WW
	\end{equation}
	is valid. Then, $\Gamma$ is inner semicontinuous at $(\bar x,\bar z)$, 
	see e.g.\ \cite[Section~2.3.3]{BonnansShapiro2000}.
	We note that \eqref{eq:RCQ} often is referred to as Robinson's constraint qualification, see \cite{Robinson1976},
	or Kurcyusz--Zowe constraint qualification, see \cite{ZoweKurcyusz1979}. In the setting of finite-dimensional
	nonlinear parametric optimization, this condition simply reduces to the Mangasarian--Fromovitz constraint qualification,
	see \cite{BonnansShapiro2000} for details.
	Let us note that \eqref{eq:RCQ} trivially holds whenever the operator $g'_z(\bar x,\bar z)$ is surjective.
\end{remark}

We note that weakly-weakly inner semicontinuity of $\Gamma$ is inherent whenever this map is actually constant.
A nontrivial situation is described in the following example.
\begin{example}{Terminal state dependence of lower level}\label{ex:weakly_weakly_inner_semicontinuity_nontrivial}
	For some time interval $I:=(0,T)$ and some natural number $n$, we consider the Hilbert space $\XX:=H^1(I;\R^n)$. 
	Clearly, the embedding $\XX\hookrightarrow C(\overline I;\R^n)$ is compact, see \cite{AdamsFournier2003}.
	This means that the evaluation operator $\XX\ni x\mapsto x(T)\in\R^n$ is well-defined and compact as well.
	
	For some set-valued mapping $\Upsilon\colon\R^n\mto\ZZ$, we define $\Gamma(x):=\Upsilon(x(T))$ for
	all $x\in\XX$. The above observation implies that $\Gamma$ is weakly-weakly inner semicontinuous 
	at $(\bar x,\bar z)\in\gph\Gamma$ whenever $\Upsilon$ is inner semicontinuous at $(\bar x(T),\bar z)$
	and the latter can be guaranteed via standard assumptions, 
	see e.g.\ \Cref{rem:lower_level_abstract_inequality_constraints}.
	
	The setting in this example reflects the situation of time-dependent coupling
	between upper and lower level, see \cite[Section~5]{Knauer2012},
	or a finite-dimensional lower level problem depending only on the terminal value
	of the leader's state variable, see \cite{BenitaDempeMehlitz2016,BenitaMehlitz2016,KalashnikovBenitaMehlitz2015}.
	
	It needs to be noted that the analysis of the above situation can be extended to cases
	where $\XX$ is a function space over some domain $\Omega\subset\R^d$ which is embedded compactly
	into $C(\overline\Omega)$, and the function of interest is evaluated at finitely many points
	from $\overline\Omega$. This applies to the setting $\XX:=H^2(\Omega)$ where $\Omega$ is a bounded
	Lipschitz domain, see \cite{AdamsFournier2003}.
\end{example}

In the following lemma, which is inspired by \cite[Theorem~2.5]{HerzogSchmidt2012}, we study upper
semicontinuity properties of the function $\varphi$. This will be useful in order to infer closedness
properties of the feasible set associated with \eqref{eq:upper_level_abstract}.
\begin{lemma}\label{lem:upper_semicontinuity_phi}
	Fix some point $\bar x\in X_\textup{ad}$ and $\bar z\in\Psi(\bar x)$.
	\begin{enumerate}
		\item Assume that $f$ is weakly sequentially upper semicontinuous 
			at $(\bar x,\bar z)$ while $\Gamma$ is weakly-weakly inner semicontinuous at $(\bar x,\bar z)$.
			Then, $\varphi$ is weakly sequentially upper semicontinuous at $\bar x$.
		\item Let $\XX$ be finite-dimensional. Assume that $f$ is upper semicontinuous at $(\bar x,\bar z)$
			while $\Gamma$ is inner semicontinuous at $(\bar x,\bar z)$.
			Then, $\varphi$ is upper semicontinuous at $\bar x$.
	\end{enumerate}
\end{lemma}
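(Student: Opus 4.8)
The plan is to prove both parts simultaneously by working with an arbitrary sequence that converges (weakly for part 1, strongly for part 2) to $\bar x$, and showing that the corresponding $\limsup$ of $\varphi$-values is bounded above by $\varphi(\bar x)$. So let $\{x_k\}_{k\in\N}\subset X_\textup{ad}$ with $x_k\weakly\bar x$ (resp.\ $x_k\to\bar x$). Since $\bar z\in\Psi(\bar x)$ we have $(\bar x,\bar z)\in\gph\Gamma$ and $\varphi(\bar x)=f(\bar x,\bar z)$. The key point is that inner semicontinuity lets us \emph{construct a competitor} for the lower level problem at each $x_k$: by \Cref{def:isc}, weakly-weakly inner semicontinuity of $\Gamma$ at $(\bar x,\bar z)$ (resp.\ inner semicontinuity) yields a sequence $\{z_k\}_{k\in\N}$ with $z_k\in\Gamma(x_k)$ for all $k$ and $z_k\weakly\bar z$ (resp.\ $z_k\to\bar z$).

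Next I would exploit the definition of $\varphi$ as an infimum: since $z_k\in\Gamma(x_k)$, we have the pointwise estimate $\varphi(x_k)\leq f(x_k,z_k)$ for every $k\in\N$. Taking $\limsup$ on both sides gives
\[
	\limsup_{k\to\infty}\varphi(x_k)\,\leq\,\limsup_{k\to\infty}f(x_k,z_k).
\]
Now I apply the semicontinuity hypothesis on $f$. In part 1, $(x_k,z_k)\weakly(\bar x,\bar z)$ in $\XX\times\ZZ$ (weak convergence of a product sequence is componentwise), and weak sequential upper semicontinuity of $f$ at $(\bar x,\bar z)$ gives $\limsup_{k\to\infty}f(x_k,z_k)\leq f(\bar x,\bar z)=\varphi(\bar x)$. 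In part 2, $\XX$ is finite-dimensional so $x_k\to\bar x$; combining with $z_k\to\bar z$ we get $(x_k,z_k)\to(\bar x,\bar z)$, and ordinary upper semicontinuity of $f$ at $(\bar x,\bar z)$ yields the same bound. Chaining the two displayed inequalities gives $\limsup_{k\to\infty}\varphi(x_k)\leq\varphi(\bar x)$, which is exactly the (weak sequential, resp.\ ordinary) upper semicontinuity of $\varphi$ at $\bar x$.

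I do not anticipate a serious obstacle here; the argument is a clean combination of the infimum estimate and the constructed recovery sequence. The one subtlety worth stating carefully is that part 2 is genuinely a special case of the mechanism in part 1: in a finite-dimensional space weak and strong convergence coincide, and inner semicontinuity implies weakly-weakly inner semicontinuity, while upper semicontinuity coincides with weak sequential upper semicontinuity — so the same four lines apply verbatim once one records these equivalences. I would also note for cleanliness that $\varphi(\bar x)$ is finite (so the inequalities make sense in $\R$) because $\Psi(\bar x)\neq\varnothing$ as established after \eqref{eq:value_function}.
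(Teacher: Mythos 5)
Your proof is correct and follows essentially the same route as the paper: use the (weakly-weakly) inner semicontinuity of $\Gamma$ at $(\bar x,\bar z)$ to produce a recovery sequence $z_k\in\Gamma(x_k)$, bound $\varphi(x_k)\leq f(x_k,z_k)$, and pass to the $\limsup$ using the upper semicontinuity of $f$ together with $f(\bar x,\bar z)=\varphi(\bar x)$; the paper writes out only part 1 and handles part 2 ``by analogous arguments'', exactly as you do in the body of your argument. One small caveat: your closing aside that part 2 ``is a special case'' of part 1 because inner semicontinuity implies weakly-weakly inner semicontinuity and upper semicontinuity coincides with weak sequential upper semicontinuity is imprecise, since in part 2 only $\XX$ is finite-dimensional while $\ZZ$ may be infinite-dimensional, so these implications can fail in the $z$-variable; this does not affect your proof, because the argument you actually give for part 2 uses strong convergence $z_k\to\bar z$ and ordinary upper semicontinuity of $f$, which is all that is needed.
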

	\begin{proof}
		We only verify the first statement of the lemma. The second one can be shown using analogous arguments.
		
		Let $\{x_k\}_{k\in\N}\subset X_\textup{ad}$ be a sequence satisfying $x_k\weakly\bar x$.
		Exploiting the weakly-weakly inner semicontinuity of $\Gamma$ at $(\bar x,\bar z)$, we find a sequence
		$\{z_k\}_{k\in\N}\subset\ZZ$ satisfying $z_k\in\Gamma(x_k)$ for all $k\in\N$ and $z_k\weakly \bar z$.
		By definition, $\varphi(x_k)\leq f(x_k,z_k)$ holds for all $k\in\N$. Now, the weak sequential upper
		semicontinuity of $f$ at $(\bar x,\bar z)$ yields
		\[
			\limsup\limits_{k\to\infty}\varphi(x_k)
			\leq
			\limsup\limits_{k\to\infty}f(x_k,z_k)
			\leq
			f(\bar x,\bar z)
			=
			\varphi(\bar x),
		\]
		and this shows the claim.
	\end{proof}
	
Now, we exploit the above lemma in order to infer the existence of optimal solutions to \eqref{eq:upper_level_abstract}.
\begin{theorem}\label{thm:existence_abstract}
	In each of the settings described below, \eqref{eq:upper_level_abstract} possesses an optimal solution.
	\begin{enumerate}
		\item The mapping $f$ is weakly sequentially upper semicontinuous on $X_\textup{ad}\times\ZZ$ while $\Gamma$ is 
			weakly-weakly inner semicontinuous on $X_\textup{ad}\times\ZZ$.
		\item The Banach space $\XX$ is finite-dimensional.
			The mapping $f$ is upper semicontinuous on $X_\textup{ad}\times\ZZ$ while $\Gamma$ is inner semicontinuous
			on $X_\textup{ad}\times\ZZ$.
	\end{enumerate}
\end{theorem}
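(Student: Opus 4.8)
The plan is to run the direct method of the calculus of variations, the point being that \Cref{ass:abstract_model} has been tailored so that the feasible set
\[
	\mathcal F:=\{(x,z)\in(X_\textup{ad}\times\ZZ)\cap\gph\Gamma\,|\,z\in\Psi(x)\}
\]
of \eqref{eq:upper_level_abstract} is nonempty and behaves well under weak sequential limits. First I would note that $\mathcal F\neq\varnothing$, since $X_\textup{ad}$ is nonempty while, as observed right after \eqref{eq:value_function}, $\Psi(x)\neq\varnothing$ holds for every $x\in X_\textup{ad}$. Hence $m:=\inf\{F(x,z)\,|\,(x,z)\in\mathcal F\}<+\infty$, and I may fix a minimizing sequence $\{(x_k,z_k)\}_{k\in\N}\subset\mathcal F$ with $F(x_k,z_k)\to m$. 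Since $(x_k,z_k)\in(X_\textup{ad}\times\ZZ)\cap\gph\Gamma$, \Cref{ass:abstract_model} yields a subsequence (not relabeled) and a point $(\bar x,\bar z)\in(X_\textup{ad}\times\ZZ)\cap\gph\Gamma$ with $x_k\weakly\bar x$ and $z_k\weakly\bar z$; in particular $\bar x\in X_\textup{ad}$ and $\bar z\in\Gamma(\bar x)$. In setting~2, finite-dimensionality of $\XX$ upgrades $x_k\weakly\bar x$ to $x_k\to\bar x$.

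The hard part will be to show that the limit is admissible, i.e.\ that $\bar z\in\Psi(\bar x)$, which is precisely the step where feasible sets of bilevel problems tend to fail to be closed. Since $\bar z\in\Gamma(\bar x)$, it suffices to verify $f(\bar x,\bar z)\le\varphi(\bar x)$, and I would obtain this by sandwiching $\varphi(x_k)$. On the one hand, $z_k\in\Psi(x_k)$ gives $f(x_k,z_k)=\varphi(x_k)$ for all $k\in\N$, so the weak sequential lower semicontinuity of $f$ from \Cref{ass:abstract_model} yields
\[
	f(\bar x,\bar z)\le\liminf_{k\to\infty}f(x_k,z_k)=\liminf_{k\to\infty}\varphi(x_k).
\]
On the other hand, $\Psi(\bar x)\neq\varnothing$ by \Cref{ass:abstract_model}, so I may fix some $\tilde z\in\Psi(\bar x)$; under the hypotheses of setting~1 (resp.\ setting~2), $f$ is weakly sequentially upper semicontinuous (resp.\ upper semicontinuous) at $(\bar x,\tilde z)$ and $\Gamma$ is weakly-weakly inner semicontinuous (resp.\ inner semicontinuous) at $(\bar x,\tilde z)$, so \Cref{lem:upper_semicontinuity_phi} shows that $\varphi$ is weakly sequentially upper semicontinuous (resp.\ upper semicontinuous) at $\bar x$, whence $\limsup_{k\to\infty}\varphi(x_k)\le\varphi(\bar x)$. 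Chaining the two displays gives $f(\bar x,\bar z)\le\varphi(\bar x)$, i.e.\ $\bar z\in\Psi(\bar x)$ and $(\bar x,\bar z)\in\mathcal F$.

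It then remains to apply the weak sequential lower semicontinuity of $F$ from \Cref{ass:abstract_model}, which gives $F(\bar x,\bar z)\le\liminf_{k\to\infty}F(x_k,z_k)=m$; together with $(\bar x,\bar z)\in\mathcal F$ this forces $F(\bar x,\bar z)=m$ (in particular $m\in\R$, as $F$ is real-valued), so that $(\bar x,\bar z)$ solves \eqref{eq:upper_level_abstract}. Both assertions are covered by this single argument; the only difference between them is whether one argues with weak convergence throughout or, exploiting finite-dimensionality of $\XX$, works with strong convergence of the $x$-components and the corresponding part of \Cref{lem:upper_semicontinuity_phi}.
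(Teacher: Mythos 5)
Your proof is correct and follows essentially the same route as the paper: the crucial step in both is the sandwich $f(\bar x,\bar z)\le\liminf_k f(x_k,z_k)=\liminf_k\varphi(x_k)\le\limsup_k\varphi(x_k)\le\varphi(\bar x)$, combining the weak sequential lower semicontinuity of $f$ with \Cref{lem:upper_semicontinuity_phi} to show that weak limits of feasible points remain feasible. The only (cosmetic) difference is that you run the direct method on a minimizing sequence, whereas the paper applies the same argument to an arbitrary sequence to establish weak sequential compactness of $(X_\textup{ad}\times\ZZ)\cap\gph\Psi$ and then invokes the Weierstrass-type existence result; your explicit choice of $\tilde z\in\Psi(\bar x)$ when applying \Cref{lem:upper_semicontinuity_phi} is a welcome clarification.
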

\begin{proof}
	Again, we only show the theorem's first assertion.
	
	For the proof, we just need to verify that the feasible set $(X_\textup{ad}\times\ZZ)\cap\gph\Psi$ 
	of \eqref{eq:upper_level_abstract} is nonempty and	weakly sequentially compact since the
	objective $F$ is supposed to be weakly sequentially lower semicontinuous.
	Noting that $\Psi(x)\neq\varnothing$ holds true for all $x\in X_\textup{ad}$, the nonemptiness of
	$(X_\textup{ad}\times\ZZ)\cap\gph\Psi$ is obvious.
	
	Let $\{(x_k,z_k)\}_{k\in\N}\subset(X_\textup{ad}\times\ZZ)\cap\gph\Psi$ be an arbitrary sequence.
	Clearly, we have $\{(x_k,z_k)\}_{k\in\N}\subset(X_\textup{ad}\times\ZZ)\cap\gph\Gamma$
	and by \Cref{ass:abstract_model}, there exists a subsequence 
	(without relabeling) converging weakly to $(\bar x,\bar z)\in(X_\textup{ad}\times\ZZ)\cap\gph\Gamma$.
	Now, the definition of the function $\varphi$ and \Cref{lem:upper_semicontinuity_phi} yield
			\[
				\varphi(\bar x)
				\leq 
				f(\bar x,\bar z)
				\leq
				\liminf\limits_{k\to\infty}f(x_k,z_k)
				=
				\liminf\limits_{k\to\infty}\varphi(x_k)
				\leq
				\limsup\limits_{k\to\infty}\varphi(x_k)
				\leq
				\varphi(\bar x),
			\]
	which shows $\varphi(\bar x)=f(\bar x,\bar z)$, i.e.\ $\bar z\in\Psi(\bar x)$ follows.
	This yields that the point $(\bar x,\bar z)$ belongs to $(X_\textup{ad}\times\ZZ)\cap\gph\Psi$,
	and this shows the claim.
\end{proof}

Let us apply the above theory to some example problems from bilevel optimal control.
\begin{example}{Inverse nonregularized control of Poisson's equation}\label{ex:optimal_control_of_bang_bang_system}
	Let $\Omega\subset\R^d$ be a bounded domain with Lipschitz boundary $\bd\Omega$.
	For fixed parameters $x^w\in\R^n$ and $x^s\in L^2(\Omega)$, we consider the optimal
	control of Poisson's equation
	\[
		\begin{aligned}
			\tfrac12\norm{y-\mathsmaller\sum\nolimits_{i=1}^nx^w_if^i}{L^2(\Omega)}^2&\,\to\,\min\limits_{y,u}&&&\\
			-\Delta y&\,=\,x^s+u&&&\\
			u_a\,\leq\,u&\,\leq\,u_b&&\text{a.e.\ on }\Omega			
		\end{aligned}
	\]
	where $f^1,\ldots,f^n\in L^2(\Omega)$ are fixed form functions and
	$u_a,u_b\in L^2(\Omega)$ are given functions satisfying $u_a<u_b$ almost everywhere on $\Omega$.
	The variables $y$ and $u$ are chosen from the respective spaces $H^1_0(\Omega)$ and $L^2(\Omega)$.
	The underlying PDE has to be understand in weak sense in $H^{-1}(\Omega):=H^1_0(\Omega)^\star$.
	In this regard, the source term $x^s+u$ from $L^2(\Omega)$ is embedded into $H^{-1}(\Omega)$, implicitly.
	Noting that no regularization term w.r.t.\ the control appears in the objective functional,
	optimal controls are promoted which take values only at the lower and upper bound $u_a$ and $u_b$,
	and such controls are referred to as bang-bang, see \cite{Troeltzsch2009}.
	
	Let $\Psi\colon \R^n\times L^2(\Omega)\mto H^1_0(\Omega)\times L^2(\Omega)$ be the solution map associated
	with the above optimal control problem. In the superordinate upper level problem, we aim to identify
	the lower level desired state via correct choice of the weights $x^w\in\R^n$ 
	and constant source $x^s\in L^2(\Omega)$ from a nonempty, closed, convex,
	and bounded set $X_\textup{ad}\subset \R^n\times L^2(\Omega)$ such that a resulting optimal solution is close
	to observed data functions $y_\textup o,u_\textup o\in L^2(\Omega)$. 
	A suitable model for this program is given by
	\[
		\begin{split}
			\tfrac12\norm{y-y_\textup o}{L^2(\Omega)}^2+\tfrac12\norm{u-u_\textup o}{L^2(\Omega)}^2&\,\to\,\min\limits_{x,y,u}\\
			(x^w,x^s)&\,\in\,X_\textup{ad}\\
			(y,u)&\,\in\,\Psi(x^w,x^s).
		\end{split}
	\]
	Due to continuity and convexity of the appearing objective functionals, they are weakly sequentially lower
	semicontinuous. Furthermore, the compactness of $H^1_0(\Omega)\hookrightarrow L^2(\Omega)$ even guarantees
	that the objective of the lower level problem is weakly sequentially continuous.
	The set $X_\textup{ad}$ is nonempty and weakly sequentially compact by assumption.
	Exploiting the linearity and continuity of the solution operator $(-\Delta)^{-1}$
	of Poisson's equation, it is not difficult to see that the graph of the lower level feasible set mapping
	$\Gamma$ is convex and closed. The boundedness of $X_\textup{ad}$ ensures the boundedness
	of $(X_\textup{ad}\times H^1_0(\Omega)\times L^2(\Omega))\cap\gph\Gamma$, and it is not difficult to see that
	this set is weakly sequentially compact as well.
	Using the properties of $(-\Delta)^{-1}$, it is easy to see that $\Gamma$ is weakly-weakly inner semicontinuous
	at all points of its graph. Now, \Cref{thm:existence_abstract} yields the existence of a solution to
	the bilevel optimal control problem under consideration.
\end{example}
\begin{example}{Optimal control of ODEs with terminal penalty cost}
	For a fixed given vector $\xi\in\R^n$ of parameters, we consider the parametric optimization problem
	\begin{equation}\label{eq:finite_dimensional_lower_level}
		\begin{aligned}
			j(\xi,z)&\,\to\,\min\limits_z\\
			g(\xi,z)&\,\leq\,0
		\end{aligned}
	\end{equation}
	where $j\colon\R^n\times\R^m\to\R$ is continuous and $g\colon\R^n\times\R^m\to\R^k$ is continuously 
	differentiable. Furthermore, we assume that $\Upsilon(\xi):=\{y\in\R^m\,|\,g(\xi,y)\leq 0\}$ is
	nonempty for each $\xi\in\R^n$, that $\bigcup_{\xi\in\R^n}\Upsilon(\xi)$ is
	bounded, and that the Mangasarian--Fromovitz constraint qualification holds at all feasible points
	associated with \eqref{eq:finite_dimensional_lower_level}.
	
	The associated upper level problem shall be given by
	\begin{equation}\label{eq:ODE_control_terminal_penalty_cost}
		\begin{aligned}
			\tfrac12\norm{x-x_\textup{d}}{L^2(I;\R^n)}^2+\tfrac\sigma2\norm{u}{L^2(I;\R^p)}^2+J(x(T),y)
				&\,\to\,\min\limits_{x,u,y}\\
				\dot x -Ax-Bu&\,=\,0\\
				x(0)&\,=\,0\\
				u_a\,\leq\,u&\,\leq\,u_b\\
				y&\,\in\,\Psi(x)
		\end{aligned}
	\end{equation}
	where $I:=(0,T)$ is a time interval, $x_\textup{d}\in L^2(I;\R^n)$ is a desired state,
	$\sigma\geq 0$ is a regularization parameter, $J\colon\R^n\times\R^m\to\R$ is lower
	semicontinuous, $A\in\R^{n\times n}$ as well as $B\in\R^{n\times p}$ are fixed
	matrices, $u_a,u_b\in L^2(I;\R^p)$ are fixed functions satisfying $u_a<u_b$ almost
	everywhere on $I$, and $\Psi\colon H^1(I;\R^n)\mto\R^m$ assigns to each $x\in H^1(I;\R^n)$
	the solution set of \eqref{eq:finite_dimensional_lower_level} for the fixed
	parameter $\xi:=x(T)$. The controls in \eqref{eq:ODE_control_terminal_penalty_cost} 
	are chosen from $L^2(I;\R^p)$.
	
	Problem \eqref{eq:ODE_control_terminal_penalty_cost} describes the situation where
	an ODE system has to be controlled in such a way that certain penalty cost resulting
	from the terminal value of the state function as well as the distance to a desirable
	state are minimized with minimal control effort. 
	Optimization problems of this kind arise in the context of gas balancing in energy networks
	and were studied in \cite{BenitaDempeMehlitz2016,BenitaMehlitz2016,KalashnikovBenitaMehlitz2015}.	
	Invoking \Cref{rem:lower_level_abstract_inequality_constraints}, the subsequently stated example,
	and \Cref{thm:existence_abstract}, we obtain the existence of
	an optimal solution associated with \eqref{eq:ODE_control_terminal_penalty_cost}.
\end{example}

Another typical situation arises when the lower level problem \eqref{eq:lower_level_abstract}
is uniquely solvable for each upper level feasible point.
\begin{theorem}\label{thm:existence_abstract_unique_lower_level}
	Assume that there exists a map $\psi\colon X_\textup{ad}\to\ZZ$ sending
	weakly convergent sequences from $X_\textup{ad}$ to weakly convergent sequences in $\ZZ$ such that
	$\Psi(x)=\{\psi(x)\}$ holds for all $x\in X_\textup{ad}$. Then, \eqref{eq:upper_level_abstract}
	possesses an optimal solution.
\end{theorem}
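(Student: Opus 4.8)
The plan is to eliminate the lower level and reduce \eqref{eq:upper_level_abstract} to a single-level minimization over $X_\textup{ad}$, to which the direct method of the calculus of variations applies. Since $\Psi(x)=\{\psi(x)\}$ for every $x\in X_\textup{ad}$, the feasible set $(X_\textup{ad}\times\ZZ)\cap\gph\Psi$ of \eqref{eq:upper_level_abstract} coincides with the graph $\{(x,\psi(x))\,|\,x\in X_\textup{ad}\}$ of $\psi$ restricted to $X_\textup{ad}$. Hence solving \eqref{eq:upper_level_abstract} is equivalent to minimizing the reduced functional $\hat F\colon X_\textup{ad}\to\R$, $\hat F(x):=F(x,\psi(x))$, over $X_\textup{ad}$; as $X_\textup{ad}\neq\varnothing$ and $F$ is real-valued, this problem is meaningful.

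Next I would pass to an infimizing sequence. Setting $m:=\inf_{x\in X_\textup{ad}}\hat F(x)\in[-\infty,\infty)$, I pick $\{x_k\}_{k\in\N}\subset X_\textup{ad}$ with $\hat F(x_k)\to m$. By weak sequential compactness of $X_\textup{ad}$ from \Cref{ass:abstract_model}, there is a subsequence (not relabeled) with $x_k\weakly\bar x$ for some $\bar x\in X_\textup{ad}$. The sequential continuity property assumed on $\psi$ then gives $\psi(x_k)\weakly\psi(\bar x)$, whence $(x_k,\psi(x_k))\weakly(\bar x,\psi(\bar x))$ in $\XX\times\ZZ$.

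Finally I would invoke lower semicontinuity of the upper level objective. Since $F$ is weakly sequentially lower semicontinuous,
\[
	\hat F(\bar x)=F(\bar x,\psi(\bar x))\leq\liminf_{k\to\infty}F(x_k,\psi(x_k))=\liminf_{k\to\infty}\hat F(x_k)=m,
\]
so $m=\hat F(\bar x)\in\R$ and the infimum is attained at $\bar x$. As $\psi(\bar x)\in\Psi(\bar x)$, the pair $(\bar x,\psi(\bar x))$ is feasible for \eqref{eq:upper_level_abstract}, and by the equivalence noted above it is an optimal solution.

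I do not expect a serious obstacle: the argument is a direct-method proof. The only points needing (minor) care are the identification of the feasible set of \eqref{eq:upper_level_abstract} with the graph of $\psi|_{X_\textup{ad}}$ — this is where the uniqueness hypothesis enters and lets us bypass the inner semicontinuity conditions required in \Cref{thm:existence_abstract} — and the passage from $x_k\weakly\bar x$ to weak convergence of the pairs $(x_k,\psi(x_k))$, which is exactly the continuity property imposed on $\psi$. Alternatively, one could argue as in the proof of \Cref{thm:existence_abstract} by verifying directly that $(X_\textup{ad}\times\ZZ)\cap\gph\Psi$ is nonempty and weakly sequentially compact and that the weakly sequentially lower semicontinuous functional $F$ attains its minimum over this set.
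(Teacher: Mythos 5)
Your proposal is correct and follows essentially the same route as the paper: eliminate the lower level via $\psi$, reduce to minimizing $x\mapsto F(x,\psi(x))$ over $X_\textup{ad}$, and use that this reduced functional is weakly sequentially lower semicontinuous (since $F$ is and $\psi$ preserves weak convergence) together with the weak sequential compactness of $X_\textup{ad}$. The only difference is cosmetic — you spell out the direct method with an infimizing sequence, whereas the paper invokes the abstract existence result for weakly sequentially lower semicontinuous functionals on weakly sequentially compact sets.
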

\begin{proof}
	The assumptions of the theorem guarantee that \eqref{eq:upper_level_abstract} is equivalent to
	\begin{align*}
		F(x,\psi(x))&\,\to\,\min\limits_x\\
		x&\,\in\,X_\text{ad}.
	\end{align*}
	Furthermore, $X_\textup{ad}\ni x\mapsto F(x,\psi(x))\in\R$ is weakly sequentially lower semicontinuous on $X_\textup{ad}$
	since $F$ is possesses this property on $\XX\times\ZZ$ while $\psi$ preserves weak convergence
	of sequences from $X_\textup{ad}$.
	Thus, the above problem possesses an optimal solution $\bar x$, i.e.\
	\eqref{eq:upper_level_abstract} possesses the optimal solution $(\bar x,\psi(\bar x))$.
\end{proof}

The above theorem particularly applies to situations where the upper level variable comes from
a finite-dimensional Banach space while the solution operator associated to the lower level
problem is continuous. This setting has been discussed in 
\cite{DempeHarderMehlitzWachsmuth2019,HarderWachsmuth2019} and will be of interest in 
\Cref{sec:stationarity_for_particular_problem}.

\subsection{How to derive necessary optimality conditions in bilevel optimal control}

In order to derive necessary optimality conditions for bilevel programming
problems, one generally aims to transfer the hierarchical model into a 
single-level program first. Therefore, three major approaches are suggested
in the literature. First, whenever the lower level problem possesses a 
uniquely determined solution for each fixed value of the upper level problem,
one could use the associated solution operator to eliminate the lower level
variable from the model. This approach has been used in 
\cite{HarderWachsmuth2019,Mehlitz2017} in order to derive necessary optimality conditions for
bilevel optimal control problems.
Second, it is possible to exploit the optimal value
function from \eqref{eq:value_function} in order to replace \eqref{eq:upper_level_abstract}
equivalently by the so-called \emph{optimal value reformulation}
\begin{align*}
	F(x,z)&\,\to\,\min\limits_{x,z}\\
	x&\,\in\,X_\textup{ad}\\
	f(x,z)-\varphi(x)&\,\leq\,0\\
	z&\,\in\,\Gamma(x).
\end{align*}
In \cite{BenitaDempeMehlitz2016,BenitaMehlitz2016,DempeHarderMehlitzWachsmuth2019,Ye1995,Ye1997},
the authors exploited this idea to infer optimality conditions in the context
of bilevel optimal control. 
We will demonstrate in \Cref{sec:stationarity_for_particular_problem}, how a relaxation
method can be combined with the optimal value approach in order to obtain a satisfactory
stationarity condition for a particular problem class from inverse optimal control.
Finally, as long as the lower level problem is convex w.r.t.\ $z$ and
regular in the sense that a constraint qualification is satisfied at each feasible
point, it is possible to replace the implicit constraint $z\in\Psi(x)$ by 
suitable necessary and sufficient optimality conditions of Karush--Kuhn--Tucker (KKT) type.
In the context of bilevel optimal control, this approach has been discussed in
\cite{MehlitzWachsmuth2016}. In this section, we will briefly sketch this last approach.
Therefore, we have to fix some assumptions first.
\begin{assumption}\label{ass:KKT_ref}
We assume that the mapping $\Gamma$ is given as stated in 
\Cref{rem:lower_level_abstract_inequality_constraints} where $C$ is a cone.
Furthermore, we suppose that $f(x,\cdot)\colon\mathcal Z\to\R$ is convex and that
$g(x,\cdot)\colon\mathcal Z\to\mathcal W$ is $C$-convex for each $x\in X_\text{ad}$.
The latter means that
\[
	\forall z,z'\in\ZZ\,\forall\gamma\in[0,1]\colon\quad
	g(x,\gamma z+(1-\gamma)z')-\gamma g(x,z)-(1-\gamma)g(x,z')\in C
\]
holds true.
\end{assumption}

Du to the postulated assumptions,
for fixed $x\in X_\textup{ad}$, $z\in\Psi(x)$ holds true if and only if
there exists a Lagrange multiplier $\lambda\in\mathcal W^\star$ which solves
the associated lower level KKT system which is given as stated below:
\begin{align*}
	&0=f'_z(x,z)+g'_z(x,z)^\star\lambda,\\
	&\lambda\in C^\circ,\\
	&0=\dual{\lambda}{g(x,z)}{\mathcal W}.
\end{align*}
Here, it was essential that the lower level problem is convex w.r.t.\ $z$ while
Robinson's constraint qualification \eqref{eq:RCQ} is valid at all lower level
feasible points. Due to the above arguments, it is now reasonable to investigate
the so-called \emph{KKT reformulation} associated with \eqref{eq:upper_level_abstract}
which is given as stated below:
\begin{equation}\label{eq:KKT_reformulation_abstract}\tag{KKT}
	\begin{split}
		F(x,z)&\,\to\,\min\limits_{x,z,\lambda}\\
		x&\,\in\,X_\textup{ad}\\
		f'_z(x,z)+g'_z(x,z)^\star\lambda&\,=\,0\\
		g(x,z)&\,\in\,C\\
		\lambda&\,\in\,C^\circ\\
		\dual{\lambda}{g(x,z)}{\mathcal W}&\,=\,0.
	\end{split}
\end{equation}
Let us note that the lower level Lagrange multiplier $\lambda$ plays the role of
a variable in \eqref{eq:KKT_reformulation_abstract}. This may cause that the problems
\eqref{eq:upper_level_abstract} and \eqref{eq:KKT_reformulation_abstract} are not equivalent
w.r.t.\ local minimizers as soon as $\lambda$ is not uniquely determined for
each $x\in X_\textup{ad}$ where $\Psi(x)\neq\varnothing$ holds, see \cite{Mehlitz2017:2}.
As reported in \cite{DempeDutta2012}, this phenomenon is already present in 
standard finite-dimensional bilevel programming.

In the situation where $C=\{0\}$ holds, the final two constraints in \eqref{eq:KKT_reformulation_abstract}
are trivial and can be omitted. Then, \eqref{eq:KKT_reformulation_abstract} reduces to
a standard nonlinear program in Banach spaces which can be tackled via classical arguments.
Related considerations can be found in \cite{HollerKunischBarnard2018}.
The subsequently stated example visualizes this approach.
\begin{example}{Inverse control of Poisson's equation}
	For a bounded domain $\Omega\subset\R^d$ and a parameter vector $x\in\R^n$, we consider
	the parametric optimal control problem
	\begin{align*}
		\tfrac12\norm{y-\mathsmaller\sum\nolimits_{i=1}^nx_if^i}{L^2(\Omega)}^2
			+\tfrac\sigma 2\norm{u}{L^2(\Omega)}^2&\,\to\,\min\limits_{y,u}\\
			-\Delta y&\,=\,u
	\end{align*}
	where $f^1,\ldots ,f^n\in L^2(\Omega)$ are given form functions and $\sigma>0$ is a regularization
	parameter.
	% and the PDE $-\Delta y=u$ has to be understand in weak sense in $H^{-1}(\Omega)$.
	For observations $y_\textup{o},u_\textup{o}\in L^2(\Omega)$ and a nonempty, convex, compact set
	$X_\textup{ad}\subset\R^n$, we consider the superordinate inverse optimal control problem
	\begin{equation}\label{eq:inverse_PDE_control_without_control_constraints}
		\begin{aligned}
		\tfrac12\norm{y-y_\textup{o}}{L^2(\Omega)}^2+\tfrac12\norm{u-u_\textup{o}}{L^2(\Omega)}^2
			&\,\to\,\min\limits_{x,y,u}\\
			x&\,\in\,X_\textup{ad}\\
			(y,u)&\,\in\Psi(x)
		\end{aligned}
	\end{equation}
	where $\Psi\colon\R^n\rightrightarrows H^1_0(\Omega)\times L^2(\Omega)$ represents the
	solution mapping of the aforementioned parametric optimal control problem.
	We can use \Cref{thm:existence_abstract_unique_lower_level} in order to infer the existence of an optimal
	solution associated with this bilevel optimal control problem.
	
	Noting that $-\Delta\colon H^1_0(\Omega)\to H^{-1}(\Omega)$ provides an isomorphism,
	the associated KKT reformulation, given by
	\[
		\begin{split}
		\tfrac12\norm{y-y_\textup{o}}{L^2(\Omega)}^2+\tfrac12\norm{u-u_\textup{o}}{L^2(\Omega)}^2
			&\,\to\,\min\limits_{x,y,u,p}\\
			x&\,\in\,X_\textup{ad}\\
			y-\mathsmaller\sum\nolimits_{i=1}^nx_if^i-\Delta p&\,=\,0\\
			\sigma u-p&\,=\,0\\
			-\Delta y-u&\,=\,0,
		\end{split}
	\]
	is equivalent to the original hierarchical model. 
	One can easily check that Robinson's constraint qualification is valid at
	each feasible point of this program which means that its KKT conditions
	provide a necessary optimality condition for the underlying inverse optimal 
	control problem.
	Thus, whenever $(\bar x,\bar y,\bar u)\in\R^n\times H^1_0(\Omega)\times L^2(\Omega)$
	is a locally optimal solution of \eqref{eq:inverse_PDE_control_without_control_constraints}, 
	then we find multipliers
	$\bar z\in\R^n$, $\bar p,\bar\mu,\bar\rho\in H^1_0(\Omega)$, and $\bar w\in L^2(\Omega)$
	which satisfy
	\[
	\begin{aligned}
		&0\,=\,\bar z-\bigl(\dual{\bar\mu}{f^i}{L^2(\Omega)}\bigr)_{i=1}^n,&\qquad
		&0\,=\,\bar y-y_\textup o+\bar\mu-\Delta\bar\rho,&\\
		&0\,=\,\bar u-u_\textup o+\sigma\bar w-\bar\rho,&\qquad
		&0\,=\,-\Delta\bar\mu-\bar w,&\\
		&\bar z\,\in\,\mathcal N_{X_\textup{ad}}(\bar x),&\qquad
		&0\,=\,\bar y-\mathsmaller\sum\nolimits_{i=1}^n\bar x_if^i-\Delta\bar p,&\\
		&0\,=\,\sigma\bar u-\bar p.&&&
	\end{aligned}
	\]
\end{example}

In case where $C$ is a non-trivial cone, the final three constraints 
of \eqref{eq:KKT_reformulation_abstract}
form a so-called
system of complementarity constraints,
i.e.\ this program is a
\emph{mathematical program with complementarity constrains} (MPCC) in Banach spaces.
As shown in \cite{MehlitzWachsmuth2016}, this results in the violation of Robinson's 
constraint qualification at all feasible points of
\eqref{eq:KKT_reformulation_abstract} and, consequently, the KKT conditions of
\eqref{eq:KKT_reformulation_abstract} may turn out to be too restrictive in order to
yield an applicable necessary optimality condition. Instead, weaker problem-tailored 
stationarity notions and constraint qualifications need to be introduced which respect
the specific variational structure, see 
\cite{Mehlitz2017:2,MehlitzWachsmuth2016,Wachsmuth2015,Wachsmuth2017}.
In bilevel optimal control, complementarity constraints are typically induced by the cone
of nonnegative functions in a suitable function space, e.g.\ $L^2(\Omega)$, 
$H^1_0(\Omega)$, or $H^1(\Omega)$. Respective considerations can be found in
\cite{ClasonDengMehlitzPruefert2019,GuoYe2016,HarderWachsmuth2018a,HarderWachsmuth2018b,
MehlitzWachsmuth2018,MehlitzWachsmuth2019}.

\section{Stationarity conditions in inverse optimal control}
	\label{sec:stationarity_for_particular_problem}

In this section, we demonstrate by means of a specific class of parameter reconstruction
problems how stationarity conditions in bilevel optimal control can be derived.

For a bounded domain $\Omega\subset\R^d$ and a parameter $x\in\R^n_+$, where $\R^n_+$ 
denotes the nonnegative orthant in $\R^n$, we study
the parametric optimal control problem
\begin{equation}\label{eq:lower_level}\tag{P$(x)$}
	\begin{aligned}
		x\cdot j(y)+\tfrac\sigma 2\norm{u}{L^2(\Omega)}^2
			&\,\to\,\min\limits_{y,u}&&&\\
			\opA y-\opB u&\,=\,0&&&\\
			u_a\,\leq\,u&\,\leq\,u_b&&\text{a.e.\ on }\Omega&
	\end{aligned}
\end{equation}
as well as the superordinated bilevel optimal control problem
\begin{equation}\label{eq:upper_level}\tag{IOC}
	\begin{aligned}
		F(x,y,u)&\,\to\,\min\limits_{x,y,u}\\
		x&\,\in\,X_\textup{ad}\\
		(y,u)&\,\in\,\Psi(x)
	\end{aligned}
\end{equation}
where $\Psi\colon\R^n\rightrightarrows\mathcal Y\times L^2(\Omega)$
denotes the solution set mapping of \eqref{eq:lower_level}.
In \eqref{eq:lower_level}, the state equation $\opA y-\opB u=0$ couples
the control $u\in L^2(\Omega)$ and the state $y\in\YY$.
In this regard, $\opA$ can be interpreted
as a differential operator.
Noting that \eqref{eq:upper_level} is motivated by underlying applications from
parameter reconstruction, it is an inverse optimal
control program.

\begin{assumption}
	We assume that $\YY$ and $\WW$ are reflexive Banach spaces.
	The functional $F\colon\R^n\times \YY\times L^2(\Omega)\to\R$ is supposed to be
	continuously Fr\'{e}chet differentiable and convex.
	Let $X_\textup{ad}\subset\R^n_+$ be nonempty and compact.
	The functional $j\colon\YY\to \R^n$ is assumed to be 
	twice continuously Fr\'{e}chet differentiable and its $n$ component functions
	are supposed to be convex.
	Moreover, we assume that the mapping $j$ satisfies $j(\YY)\subset \R^n_+$.
	Furthermore, $\sigma>0$ is fixed. 
	Let linear operators $\opA\in\linop{\YY}{\WW}$ as well as $\opB\in\linop{L^2(\Omega)}{\WW}$
	be chosen such that $\opA$ is continuously invertible while $\opB$ is compact.
	Finally, we assume that $u_a,u_b\colon\Omega\to\overline\R$ are measurable
	functions such that
	\[
		U_\textup{ad}:=\{
			u\in L^2(\Omega)\,|\,
			u_a\leq u\leq u_b\text{ a.e.\ on }\Omega
			\}
	\]
	is nonempty.
\end{assumption}

Below, we present two illustrative examples where all these assumptions hold.

\begin{example}{Weighted lower level target-type objectives}
	We choose $\YY:=H^1_0(\Omega)$, $\WW:=H^{-1}(\Omega)$, as well as $\opA:=-\Delta$ 
	while $\opB$ represents the compact embedding $L^2(\Omega)\hookrightarrow H^{-1}(\Omega)$.
	For fixed functions $y^1_\textup d,\ldots,y^n_\textup{d}\in L^2(\Omega)$,
	the lower level objective function is defined by
	\[
		\R^n\times H^1_0(\Omega)\times L^2(\Omega)\ni
		(x,y,u)
		\mapsto
		\mathsmaller\sum\nolimits_{i=1}^nx_i\norm{y-y^i_\textup d}{L^2(\Omega)}^2
		+
		\tfrac\sigma2\norm{u}{L^2(\Omega)}^2\in\R.
	\]
	The upper level feasible set is given by the standard simplex
	\begin{equation}\label{eq:standard_simplex}
		\{x\in\R^n\,|\,x\geq 0,\,\mathsmaller\sum\nolimits_{i=1}^nx_i=1\}.
	\end{equation}
	Such bilevel optimal control problems, where the precise form of the lower level
	target-type objective mapping has to be reconstructed,
	have been studied in \cite{HarderWachsmuth2019}.	
\end{example}

\begin{example}{Optimal measuring}
	Let $\Omega \subset \R^d$, $d \in \{2,3\}$, be a bounded Lipschitz domain.
	We fix $p \in (3,6)$ as in \cite[Theorem~0.5]{JerisonKenig1995}.
	Let us set $\YY:=W^{1,p}_0(\Omega)$ and $\WW:=W^{-1,p}(\Omega)$.
	Again, we fix $\opA:=-\Delta$,
	and $\opB$ represents the embedding 
	$L^2(\Omega)\hookrightarrow W^{-1,p}(\Omega) := W_0^{1,p'}(\Omega)^\star$
	where $p'=p/(p-1)$ is the conjugate coefficient associated with $p$.
	According to \cite[Theorem~0.5]{JerisonKenig1995}, $\opA$ is continuously invertible.
	Due to the Rellich–Kondrachov theorem,
	the embedding from $W_0^{1,p'}(\Omega)$ to $L^2(\Omega)$
	is compact.
	Since $\opB$ is the adjoint of this embedding,
	Schauder's theorem implies the compactness of $\opB$.
	Furthermore, we note that the embedding 
	$W^{1,p}_0(\Omega)\hookrightarrow C(\overline\Omega)$
	is compact in this setting.	
	
	Let $\omega^1,\ldots,\omega^n\in\overline\Omega$ be fixed points.
	We consider the lower level objective function given by
	\[
		\R^n\times W^{1,p}_0(\Omega)\times L^2(\Omega)\ni
		(x,y,u)\mapsto \mathsmaller\sum\nolimits_{i=1}^nx_i(y(\omega^i)-y_\textup{d}(\omega^i))^2
			+\tfrac\sigma2\norm{u}{L^2(\Omega)}^2\in\R
	\]
	where $y_\textup{d}\in C(\overline\Omega)$ is a given desired state.
	Noting that the state space $W^{1,p}_0(\Omega)$ is continuously embedded into $C(\overline\Omega)$,
	this functional is well-defined.
	At the upper level stage, we minimize
	\[
		\R^n\times W^{1,p}_0(\Omega)\times L^2(\Omega)\ni
		(x,y,u)\mapsto \tfrac12\norm{y-y_\textup d}{L^2(\Omega)}^2+\tfrac12\abs{x}{2}^2
		\in\R
	\]
	where $x$ comes from the standard simplex given in \eqref{eq:standard_simplex}.
	The associated bilevel optimal control problem optimizes the \emph{measurement}
	of the distance between the actual state and the desired state by reduction
	to pointwise evaluations.
\end{example}

\subsection{The lower level problem}\label{sec:lower_level}

For brevity, we denote by $f\colon\R^n\times\YY\times L^2(\Omega)\to\R$ the objective
functional of \eqref{eq:lower_level}. 
By construction, the map $f(x,\cdot,\cdot)\colon\YY\times L^2(\Omega)\to\R$
is convex for each $x\in\R^n_+$.
\begin{lemma}\label{lem:existence_and_uniqueness_lower_level}
	For each $x\in \R^n_+$, \eqref{eq:lower_level} possesses a unique
	optimal solution.
\end{lemma}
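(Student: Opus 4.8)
The plan is to apply the direct method of the calculus of variations to \eqref{eq:lower_level} for a fixed $x\in\R^n_+$, and then to use strict convexity to upgrade existence to uniqueness. First I would eliminate the state variable: since $\opA$ is continuously invertible, the state equation $\opA y - \opB u = 0$ forces $y = \opA^{-1}\opB u$, so \eqref{eq:lower_level} is equivalent to the reduced problem of minimizing $\widehat f(u) := x\cdot j(\opA^{-1}\opB u) + \tfrac\sigma2\norm{u}{L^2(\Omega)}^2$ over $u\in U_\textup{ad}$. The feasible set $U_\textup{ad}$ is nonempty (by assumption), convex, and closed in $L^2(\Omega)$, hence weakly sequentially closed; but it need not be bounded, so coercivity must do the work of compactness.

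Next I would check the three ingredients of the direct method for $\widehat f$ on $U_\textup{ad}$. Coercivity: because $j(\YY)\subset\R^n_+$ and $x\in\R^n_+$, the term $x\cdot j(\opA^{-1}\opB u)$ is nonnegative, so $\widehat f(u)\geq\tfrac\sigma2\norm{u}{L^2(\Omega)}^2\to\infty$ as $\norm{u}{L^2(\Omega)}\to\infty$; this bounds any minimizing sequence $\{u_k\}$ in $L^2(\Omega)$, which is reflexive, so after passing to a subsequence $u_k\weakly\bar u$ for some $\bar u\in U_\textup{ad}$. Weak sequential lower semicontinuity of $\widehat f$: the quadratic term $u\mapsto\tfrac\sigma2\norm{u}{L^2(\Omega)}^2$ is convex and continuous, hence weakly sequentially lower semicontinuous; for the remaining term, the compactness of $\opB$ and continuity of $\opA^{-1}$ give that $\opA^{-1}\opB$ maps weakly convergent sequences to strongly convergent ones, so $y_k := \opA^{-1}\opB u_k \to \bar y := \opA^{-1}\opB\bar u$ strongly in $\YY$, and then continuity of $j$ yields $x\cdot j(y_k)\to x\cdot j(\bar y)$. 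Adding the two pieces shows $\widehat f(\bar u)\leq\liminf_k\widehat f(u_k)$, so $\bar u$ is a minimizer, and $(\bar y,\bar u)$ solves \eqref{eq:lower_level}.

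For uniqueness I would argue that $\widehat f$ is strictly convex on $L^2(\Omega)$: the map $u\mapsto\opA^{-1}\opB u$ is linear, the component functions of $j$ are convex and $x\geq 0$, so $u\mapsto x\cdot j(\opA^{-1}\opB u)$ is convex; adding the strictly convex term $\tfrac\sigma2\norm{u}{L^2(\Omega)}^2$ (here $\sigma>0$ is essential) makes $\widehat f$ strictly convex. Minimizers of a strictly convex functional over a convex set are unique, so $\bar u$ is the unique optimal control; the state $\bar y=\opA^{-1}\opB\bar u$ is then uniquely determined as well, giving $\Psi(x)=\{(\bar y,\bar u)\}$.

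The only genuinely delicate point is the lower semicontinuity of the lower level objective along the minimizing sequence, and this is exactly where the compactness of $\opB$ is used: without it one would only have $y_k\weakly\bar y$, and $x\cdot j(\cdot)$ — being merely convex in $y$, not affine — would give lower semicontinuity of that term, which combined with the quadratic term still suffices; so in fact compactness of $\opB$ is convenient but weak lower semicontinuity of the convex map $y\mapsto x\cdot j(y)$ already closes the argument. I would present the compactness route since it is the one the surrounding assumptions are tailored to, but I expect no real obstacle here — the proof is a textbook application of the direct method once the state has been eliminated.
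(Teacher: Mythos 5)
Your proposal is correct and follows essentially the same route as the paper: eliminate the state via $y=\opA^{-1}\opB u$ and exploit the convexity structure of the reduced objective together with the weak sequential closedness of $U_\textup{ad}$. The only cosmetic difference is that the paper invokes strong convexity of the reduced objective (convex term plus $\tfrac\sigma2\norm{u}{L^2(\Omega)}^2$) to get existence and uniqueness in one stroke, whereas you split the argument into the direct method (coercivity plus weak lower semicontinuity) for existence and strict convexity for uniqueness.
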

\begin{proof}
	Noting that $\opA$ is an isomorphism, we may consider the state-reduced problem
	\begin{equation}\label{eq:state_reduced_lower_level}
		\min\limits_{u}\{f(x,\opS u,u)\,|\,u\in U_\textup{ad}\}
	\end{equation}
	where $\opS:=\opA^{-1}\circ\opB\in\linop{L^2(\Omega)}{\YY}$ is the solution operator
	of the constraining PDE. Due to the above considerations, the linearity of $\opS$,
	and the continuity of all appearing functions,
	the objective functional of \eqref{eq:state_reduced_lower_level} is convex and continuous.
	Observing that $x\cdot j(\opS u)\geq 0$ holds for all $u\in L^2(\Omega)$
	while $L^2(\Omega)\ni u\mapsto\tfrac\sigma 2\norm{u}{L^2(\Omega)}^2\in\R$ is coercive,
	the objective of \eqref{eq:state_reduced_lower_level} is already strongly convex w.r.t.\ $u$. 
	Since $U_\textup{ad}$ is weakly sequentially closed, \eqref{eq:state_reduced_lower_level}
	needs to possess a unique solution $\bar u$. 
	Consequently, $(\opS\bar u,\bar u)$ is the uniquely determined solution of \eqref{eq:lower_level}.	
\end{proof}

Observing that the lower level problem \eqref{eq:lower_level} is regular  
in the sense that Robinson's constraint qualification is valid at all feasible points,
see \Cref{rem:lower_level_abstract_inequality_constraints},
its uniquely determined solution for the fixed parameter $x\in \R^n_+$ is characterized by
the associated KKT system
\begin{subequations}\label{eq:lower_level_KKT}
	\begin{align}
		\label{eq:lower_level_KKT_y}
			&0=j'(y)^\star x+\mathtt A^\star p,\\
		\label{eq:lower_level_KKT_u}
			&0=\sigma u-\mathtt B^\star p+\lambda,\\
		\label{eq:lower_level_normal_cone}
			&\lambda\in\mathcal N_{U_\textup{ad}}(u)
	\end{align}
\end{subequations}
where $p\in\WW^\star$ and $\lambda\in L^2(\Omega)$ are the Lagrange multipliers.

The finding of \cref{lem:existence_and_uniqueness_lower_level} allows us to introduce mappings 
$\psi^y\colon \R^n_+\to\YY$ and $\psi^u\colon \R^n_+\to L^2(\Omega)$
by $\Psi(x)=\{(\psi^y(x),\psi^u(x))\}$ for all $x\in\R^n_+$. 
Since $\opA^\star$ is continuously invertible,
$p$ is uniquely determined by \eqref{eq:lower_level_KKT_y}
and, consequently, the uniqueness of $\lambda$ follows from
\eqref{eq:lower_level_KKT_u}.
This gives rise to the mappings
$\phi^p \colon \R^n_+ \to \WW^\star$
and
$\phi^\lambda \colon \R^n_+ \to L^2(\Omega)$
that assign to each $x\in \R^n_+$ the lower level Lagrange multipliers $p$ and $\lambda$
which characterize the unique minimizer $(\psi^y(x),\psi^u(x))$, respectively.

Before we continue,
we give an auxiliary result on $j$.
\begin{lemma}
	\label{lem:bound_ddj}
	Let $\hat X \subset \R^n_+$ be compact.
	Then, there exists a constant $C > 0$, such that
	\begin{align*}
		\abs{j(y_2) - j(y_1) - j'(y_1)(y_2-y_1)}{2}
		&\le \tfrac C2 \norm{y_2 - y_1}{\YY}^2
		,
		\\
		\norm{j'(y_2) - j'(y_1)}{\linop{\YY}{\R^n}}
		&\le C \norm{y_2 - y_1}{\YY}
	\end{align*}
	with $y_i := \psi^y(x_i)$, $i = 1,2$,
	holds for all $x_1,x_2 \in \hat X$.
\end{lemma}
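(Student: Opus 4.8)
The plan is to show that the states $\psi^y(x)$, for $x$ ranging over $\hat X$, stay within a fixed \emph{compact} subset of $\YY$, and then to combine the continuity of $j''$ on this compact set with a Taylor expansion along line segments.

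First, I would argue relative compactness of $\{\psi^y(x) : x \in \hat X\}$ in $\YY$. As in the proof of \Cref{lem:existence_and_uniqueness_lower_level}, fix some $u_0 \in U_\textup{ad}$. For $x \in \hat X$, optimality of $\psi^u(x)$ in \eqref{eq:state_reduced_lower_level} together with $x \ge 0$ and $j(\YY) \subset \R^n_+$ gives
\[
	\tfrac\sigma2 \norm{\psi^u(x)}{L^2(\Omega)}^2
	\le x \cdot j(\opS \psi^u(x)) + \tfrac\sigma2\norm{\psi^u(x)}{L^2(\Omega)}^2
	\le x \cdot j(\opS u_0) + \tfrac\sigma2\norm{u_0}{L^2(\Omega)}^2 ,
\]
and the right-hand side is bounded uniformly over the compact set $\hat X$, so $\{\psi^u(x) : x\in\hat X\}$ is bounded in $L^2(\Omega)$. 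Since $\opB$ is compact and $\opA^{-1}$ is continuous, $\opS = \opA^{-1}\opB \in \linop{L^2(\Omega)}{\YY}$ is compact, whence $\{\psi^y(x) : x\in\hat X\} = \opS\bigl(\{\psi^u(x):x\in\hat X\}\bigr)$ is relatively compact in $\YY$; let $K$ denote its (compact) closure.

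Next, I would pass to the set $S := \{(1-t)\,y_1 + t\,y_2 : y_1,y_2 \in K,\ t\in[0,1]\}$ of all segments joining points of $K$. As the continuous image of the compact set $K \times K \times [0,1]$, $S$ is compact, so the continuous mapping $j'' : \YY \to \linop{\YY}{\linop{\YY}{\R^n}}$ is bounded on $S$; set $C := \sup_{y\in S} \norm{j''(y)}{\linop{\YY}{\linop{\YY}{\R^n}}}$. Finally, for $x_1,x_2\in\hat X$ and $y_i := \psi^y(x_i) \in K$, the segment $[y_1,y_2]$ lies in $S$, and the standard Taylor expansions with integral remainder
\[
	j(y_2) - j(y_1) - j'(y_1)(y_2-y_1)
	= \int_0^1 (1-t)\,j''\bigl(y_1 + t(y_2-y_1)\bigr)\bigl[y_2-y_1,\,y_2-y_1\bigr]\,\mathrm{d}t ,
\]
\[
	j'(y_2) - j'(y_1) = \int_0^1 j''\bigl(y_1 + t(y_2-y_1)\bigr)\bigl[y_2-y_1\bigr]\,\mathrm{d}t
\]
(applied componentwise) give, upon estimating the integrands by $C\,\norm{y_2-y_1}{\YY}^2$ and $C\,\norm{y_2-y_1}{\YY}$ and integrating in $t$, the two asserted inequalities.

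I expect the crux to be the compactness step: a continuous function on a merely bounded subset of an infinite-dimensional Banach space need not be bounded, so it is genuinely necessary to upgrade boundedness of $\{\psi^u(x):x\in\hat X\}$ in $L^2(\Omega)$ to relative compactness of $\{\psi^y(x):x\in\hat X\}$ in $\YY$ — which is exactly where the compactness of $\opB$ (hence of $\opS$) is used. Everything else is routine.
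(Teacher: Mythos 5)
Your proof is correct and follows essentially the same route as the paper: bound $j''$ on a compact subset of $\YY$ containing all segments between the states $\psi^y(x)$, $x\in\hat X$, and conclude by a Taylor expansion with remainder. The only difference is that you explicitly establish the relative compactness of $\{\psi^y(x)\,|\,x\in\hat X\}$ (via the uniform bound on the optimal controls obtained from comparison with a fixed $u_0\in U_\textup{ad}$ and the compactness of $\opS=\opA^{-1}\circ\opB$) and then take the set of segments, whereas the paper works with $\cl\conv\{\psi^y(\hat x)\,|\,\hat x\in\hat X\}$ and asserts its compactness without proof --- a step that requires precisely the argument you supply, since the continuity of $\psi^y$ is only proved afterwards in \Cref{lem:continuity_of_solution_operator}.
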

\begin{proof}
	Since $\hat X$ is assumed to be compact,
	the points $y_2 + (1-t) \, y_1$, $t \in [0,1]$,
	belong to the compact set
	$\hat\YY := \cl\conv\{\psi^y(\hat x) \,|\, \hat x \in \hat X\}$.
	Since $j''$ is continuous, we have
	$C := \sup_{\hat y \in \hat\YY} \norm{j''(\hat y)}{} < \infty$.
	Now, the Taylor estimate
	follows from \cite[Theorem~5.6.1]{Cartan1967}
	and the Lipschitz estimate on $j'$ is clear.
\end{proof}

Below, we want to study the continuity properties of 
the mappings $\psi^y$ and $\psi^u$ as well as $\phi^p$ and $\phi^\lambda$.
\begin{lemma}\label{lem:continuity_of_solution_operator}
	There are continuous functions $C_y,C_u\colon \R^n_+\to\R$ such that the following
	estimates hold:
	\begin{align*}
		\forall x_1,x_2\in \R^n_+\colon\qquad
			\norm{\psi^y(x_1)-\psi^y(x_2)}{\mathcal Y}&\leq C_y(x_1)\abs{x_1-x_2}{2},\\
			\norm{\psi^u(x_1)-\psi^u(x_2)}{L^2(\Omega)}&\leq C_u(x_1)\abs{x_1-x_2}{2}.
	\end{align*}
	Particularly, $\psi^y$ and $\psi^u$ are Lipschitz continuous on $X_\textup{ad}$.
	Additionally, $\phi^p$ and $\phi^\lambda$ are continuous on $\R^n_+$ 
	and Lipschitz continuous on $X_\textup{ad}$.
\end{lemma}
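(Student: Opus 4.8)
The plan is to establish the Lipschitz-type estimate for $\psi^u$ by exploiting the strong monotonicity of the state-reduced lower level problem~\eqref{eq:state_reduced_lower_level}, to arrange the argument so that the constant depends on the first argument only, and then to propagate the estimate to $\psi^y$, $\phi^p$, and $\phi^\lambda$ through the (linear) relations linking these quantities. Recall from the proof of \Cref{lem:existence_and_uniqueness_lower_level} that $\opS=\opA^{-1}\opB$, so that for fixed $x\in\R^n_+$ the point $\psi^u(x)$ is the unique minimizer over $U_\textup{ad}$ of the convex and continuously differentiable functional $u\mapsto x\cdot j(\opS u)+\tfrac\sigma2\norm{u}{L^2(\Omega)}^2$. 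Hence $\psi^u(x)$ is characterized by the variational inequality
\[
	\dual{\sigma\,\psi^u(x)+\opS^\star j'(\psi^y(x))^\star x}{v-\psi^u(x)}{L^2(\Omega)}\ge 0
	\qquad\text{for all }v\in U_\textup{ad},
\]
which is just the reduced form of the lower level KKT system~\eqref{eq:lower_level_KKT}.

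I would write this variational inequality at $x_1$ and at $x_2$ (abbreviating $u_i:=\psi^u(x_i)$ and $y_i:=\psi^y(x_i)=\opS u_i$), test the inequality for $x_1$ with $v=u_2$ and the one for $x_2$ with $v=u_1$, and add the results. Setting $T_i(u):=\sigma u+\opS^\star j'(\opS u)^\star x_i$, the nonlinear part $u\mapsto\opS^\star j'(\opS u)^\star x_i$ is the derivative of the convex functional $u\mapsto x_i\cdot j(\opS u)$ (here $x_i\in\R^n_+$ together with the convexity of the components of $j$ enters), hence monotone, so $T_i$ is strongly monotone with modulus $\sigma$. The point is that inserting the term $T_2(u_1)$ and invoking the strong monotonicity of $T_2$ (rather than inserting $T_1(u_2)$ and using that of $T_1$) makes the remaining term equal to $\dual{\opS^\star j'(y_1)^\star(x_1-x_2)}{u_2-u_1}{L^2(\Omega)}$, i.e.\ with $j'$ evaluated at $y_1=\psi^y(x_1)$. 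Dividing by one factor $\norm{u_1-u_2}{L^2(\Omega)}$ then gives the asserted estimate with $C_u(x):=\sigma^{-1}\norm{\opS}{}\,\norm{j'(\psi^y(x))}{\linop{\YY}{\R^n}}$, and applying $\opS$ yields the estimate for $\psi^y$ with $C_y:=\norm{\opS}{}\,C_u$.

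It remains to check the continuity of $C_y$ and $C_u$, which reduces to continuity of $x\mapsto\norm{j'(\psi^y(x))}{\linop{\YY}{\R^n}}$: for fixed $x_1$ the right-hand side of the estimate for $\psi^y$ tends to zero as $x_2\to x_1$, so $\psi^y$ (and, by its own estimate, $\psi^u$) is continuous on $\R^n_+$, and since $j'\colon\YY\to\linop{\YY}{\R^n}$ is continuous ($j$ being twice continuously Fr\'echet differentiable), $C_u$ and $C_y$ are continuous; being continuous on the compact set $X_\textup{ad}$, they are bounded there, which gives the Lipschitz continuity of $\psi^y$ and $\psi^u$ on $X_\textup{ad}$. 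For the multipliers I would solve \eqref{eq:lower_level_KKT_y} for $p$, using that $\opA^\star$ is continuously invertible, to obtain $\phi^p(x)=-(\opA^\star)^{-1}j'(\psi^y(x))^\star x$, and then read off $\phi^\lambda(x)=\opB^\star\phi^p(x)-\sigma\,\psi^u(x)$ from \eqref{eq:lower_level_KKT_u}. Writing $j'(y_1)^\star x_1-j'(y_2)^\star x_2=j'(y_1)^\star(x_1-x_2)+(j'(y_1)^\star-j'(y_2)^\star)x_2$ and using the continuity of $\psi^y$ and of $j'$ gives continuity of $\phi^p$, hence of $\phi^\lambda$, on $\R^n_+$, while on $X_\textup{ad}$ one bounds $\norm{j'(y_1)-j'(y_2)}{\linop{\YY}{\R^n}}$ via \Cref{lem:bound_ddj} and the already established Lipschitz continuity of $\psi^y$ on $X_\textup{ad}$ to conclude Lipschitz continuity of $\phi^p$ and $\phi^\lambda$ there. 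The only genuinely delicate step is the first one, namely choosing the insertion so that the derivative of $j$ sits at $x_1$ in the constant; everything afterwards is routine estimation with bounded linear operators together with the continuity of $\psi^y$.
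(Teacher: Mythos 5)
Your proposal is correct and follows essentially the same route as the paper: the paper adds the two lower level KKT/normal-cone inequalities and drops the term $\dual{(j'(y_1)-j'(y_2))^\star x_2}{y_1-y_2}{\YY}$ by monotonicity of the derivative of $y\mapsto x_2\cdot j(y)$, which is exactly your strong-monotonicity argument for $T_2$ in the reduced variational inequality, and in both cases the decisive point is arranging the insertion so that $j'$ is evaluated at $\psi^y(x_1)$. The treatment of $C_y$, $C_u$, and of $\phi^p$, $\phi^\lambda$ (solving \eqref{eq:lower_level_KKT_y} via $(\opA^\star)^{-1}$, then \eqref{eq:lower_level_KKT_u}, with \Cref{lem:bound_ddj} giving the Lipschitz estimate on $X_\textup{ad}$) also matches the paper's proof.
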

\begin{proof}
	Fix $x_1,x_2\in \R^n_+$ arbitrarily and set $y_i:=\psi^y(x_i)$ as well as $u_i:=\psi^u(x_i)$
	for $i=1,2$. Furthermore, let $p_i\in \WW^\star$ and $\lambda_i\in L^2(\Omega)$ be
	the multipliers which solve \eqref{eq:lower_level_KKT} for $i=1,2$. 
	Testing the associated condition \eqref{eq:lower_level_KKT_u}
	with $u_2-u_1$ and exploiting \eqref{eq:lower_level_normal_cone}, we have
	\begin{align*}
		\dual{\sigma u_1-\mathtt B^\star p_1}{u_2-u_1}{L^2(\Omega)}
			=\dual{-\lambda_1}{u_2-u_1}{L^2(\Omega)}&\geq 0,\\
		\dual{\sigma u_2-\mathtt B^\star p_2}{u_1-u_2}{L^2(\Omega)}
			=\dual{-\lambda_2}{u_1-u_2}{L^2(\Omega)}&\geq 0.
	\end{align*}
	Adding up these inequalities yields
	\[
		\dual{\sigma(u_1-u_2)-\mathtt B^\star(p_1-p_2)}{u_2-u_1}{L^2(\Omega)}\geq 0.
	\]
	Next, we rearrange this inequality and exploit 
	\eqref{eq:lower_level_KKT_y}, $y_i=(\mathtt A^{-1}\circ\mathtt B)u_i$, $i=1,2$,
	 as well as the convexity of the mapping
	$\YY\ni y\mapsto x_2\cdot j(y)\in\R$ in order to obtain
	\begin{align*}
		\sigma\norm{u_1-u_2}{L^2(\Omega)}^2
			&\leq
		\dual{\mathtt B^\star(p_1-p_2)}{u_1-u_2}{L^2(\Omega)}\\
			&=
		\dual{p_1-p_2}{\mathtt B(u_1-u_2)}{\WW}
			=
		\dual{p_1-p_2}{\mathtt A(y_1-y_2)}{\WW}\\
			&=
		\dual{\opA^\star(p_1-p_2)}{y_1-y_2}{\YY}
			=
		\dual{j'(y_2)^\star x_2-j'(y_1)^\star x_1}{y_1-y_2}{\mathcal Y}\\
			&=
		\dual{j'(y_1)^\star(x_2-x_1)}{y_1-y_2}{\mathcal Y}\\
			&\qquad
				-\dual{(j'(y_1)-j'(y_2))^\star x_2}{y_1-y_2}{\mathcal Y}\\
			&\leq
		\dual{j'(y_1)^\star(x_2-x_1)}{y_1-y_2}{\mathcal Y}\\
			&=
		\dual{j'(y_1)^\star(x_2-x_1)}{(\mathtt A^{-1}\circ\mathtt B)(u_1-u_2)}{\YY}\\
			&\leq
		C\norm{j'(y_1)}{\linop{\YY}{\R^n}}\abs{x_1-x_2}{2}\norm{u_1-u_2}{L^2(\Omega)}
	\end{align*}
	for some constant $C>0$ which does not depend on $x_i$, $y_i$, and $u_i$, $i=1,2$.
	This way, we have
	\[
		\norm{u_1-u_2}{L^2(\Omega)}
			\leq
		(C/\sigma)\norm{j'(\psi^y(x_1))}{\linop{\YY}{\R^n}}\abs{x_1-x_2}{2}
	\]
	which yields the estimate
	\[
		\forall x_1,x_2\in \R^n_+\colon\quad
			\norm{\psi^u(x_1)-\psi^u(x_2)}{L^2(\Omega)}
				\leq (C/\sigma)\norm{j'(\psi^y(x_1))}{\linop{\YY}{\R^n}}\abs{x_1-x_2}{2}.
	\]
	As a consequence, the map $\psi^u$ is continuous everywhere on $\R^n_+$. 
	Observing that $\psi^y=\mathtt A^{-1}\circ\mathtt B\circ\psi^u$ holds, $\psi^y$
	is continuous on $\R^n_+$ as well. Recalling that $j$ is continuously Fr\'{e}chet
	differentiable, the desired estimates follow by setting
	\begin{align*}
		\forall x\in \R^n_+\colon\quad
		C_u(x)&:=(C/\sigma)\norm{j'(\psi^y(x))}{\linop{\YY}{\R^n}},\\
		C_y(x)&:=\norm{\mathtt A^{-1}\circ\mathtt B}{\linop{L^2(\Omega)}{\mathcal Y}}\,C_u(x).
	\end{align*}
	This completes the proof for $\psi^y$ and $\psi^u$.

	The continuity of $\phi^p$ and $\phi^\lambda$ on $\R^n_+$ follows easily by continuity
	of $\psi^y$ and $\psi^u$ exploiting \eqref{eq:lower_level_KKT_y}, \eqref{eq:lower_level_KKT_u},
	and the continuity of $j'$.
	Since the map $j' \circ \psi^y \colon \R^n_+ \to \linop{\YY}{\R^n}$ is
	continuous on $\R^n_+$
	and, by \cref{lem:bound_ddj},
	Lipschitz on the compact set $X_\textup{ad}$, we obtain
	\begin{equation*}
		\norm{\phi^p(x_1) - \phi^p(x_2)}{\WW^\star}
		\le
		C \, \norm{j'(\psi^y(x_1))^\star x_1 - j'(\psi^y(x_2))^\star x_2}{\YY}
		\le
		\hat C \, \abs{x_1 - x_2}{2}
	\end{equation*}
	for all $x_1,x_2\in X_\textup{ad}$ and some constants $\hat C,C>0$.
	The Lipschitz continuity of $\phi^\lambda$ on $X_\textup{ad}$ 
	now follows from \eqref{eq:lower_level_KKT_u}.
\end{proof}

We introduce a function $\varphi\colon \R^n_+\to\R$ by means of
\[
	\forall x\in \R^n_+\colon\quad
	\varphi(x):=f(x,\psi^y(x),\psi^u(x)).
\]
Due to the above lemma, $\varphi$ is continuous and equals the optimal value
function associated with \eqref{eq:lower_level}.
Observing that the function $f$ is affine w.r.t.\ the parameter $x$, it is easy
to see that $\varphi$ is concave, see \cite[Proposition~3.5]{FiaccoKyparisis1986} as well.

Next, we are going to study
the differentiability of $\varphi$.
We are facing the problem that $\varphi$
is only defined on the closed set $\R^n_+$.
In fact, for $x \in \R^n \setminus \R^n_+$ the objective function of
\eqref{eq:lower_level} might be non-convex or unbounded from below and
\eqref{eq:lower_level} might fail to possess a minimizer.
To circumvent this difficulty, we first prove
that $\varphi$ admits a first-order Taylor expansion,
and then we extend $\varphi$ to a continuously differentiable function
via Whitney's extension theorem.
\begin{lemma}\label{lem:differentiability_of_phi}
We define the function $\varphi' \colon \R^n_+ \to \R^n$ via
	\[
		\forall \bar x \in \R^n_+\colon\qquad
		\varphi'(\bar x):=j(\psi^y(\bar x)).		
	\]
	Then, $\varphi'$ is continuous and
	for every compact subset $\hat X \subset \R^n_+$
	there exists a constant $C > 0$
	such that the
	Taylor-like estimate
	\begin{equation*}
		\forall x,\bar x \in \hat X\colon\quad
		\bigl\lvert
			\varphi(x)
			-
			\varphi(\bar x)
			-
			\varphi'(\bar x) \cdot (x - \bar x)
		\bigr\rvert
		\le
		C \abs{x - \bar x}{2}^2
	\end{equation*}
	holds.
\end{lemma}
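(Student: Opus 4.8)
The plan is to establish the Taylor-like estimate by combining the KKT characterization of the lower level solution with the Lipschitz bounds from \Cref{lem:continuity_of_solution_operator} and \Cref{lem:bound_ddj}. Fix $x,\bar x\in\hat X$ and abbreviate $y:=\psi^y(x)$, $u:=\psi^u(x)$, $\bar y:=\psi^y(\bar x)$, $\bar u:=\psi^u(\bar x)$, with corresponding multipliers $p,\lambda$ and $\bar p,\bar\lambda$. First I would write out $\varphi(x)-\varphi(\bar x)$ using the definition $\varphi(\cdot)=f(\cdot,\psi^y(\cdot),\psi^u(\cdot))$, split the difference into the part coming from the explicit $x$-dependence of $f$ (which is affine: $x\cdot j(\cdot)$) and the part coming from the dependence through $(y,u)$. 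The affine part contributes $x\cdot j(y)-\bar x\cdot j(\bar y)$; the control-regularization part contributes $\tfrac\sigma2(\norm{u}{L^2(\Omega)}^2-\norm{\bar u}{L^2(\Omega)}^2)$. The proposed derivative is $\varphi'(\bar x)=j(\bar y)$, so the remainder to be estimated is $x\cdot j(y)-\bar x\cdot j(\bar y)+\tfrac\sigma2(\norm{u}{L^2(\Omega)}^2-\norm{\bar u}{L^2(\Omega)}^2)-j(\bar y)\cdot(x-\bar x)$, which simplifies to $x\cdot(j(y)-j(\bar y))+\tfrac\sigma2(\norm{u}{L^2(\Omega)}^2-\norm{\bar u}{L^2(\Omega)}^2)$.

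Next I would linearize $j(y)-j(\bar y)$ around $\bar y$: by \Cref{lem:bound_ddj} we have $j(y)-j(\bar y)=j'(\bar y)(y-\bar y)+O(\norm{y-\bar y}{\YY}^2)$, and the quadratic term is already of the desired order since $\norm{y-\bar y}{\YY}\le C_y(\bar x)\abs{x-\bar x}{2}$ by \Cref{lem:continuity_of_solution_operator} (with $C_y$ bounded on the compact set $\hat X$). Similarly $\tfrac\sigma2(\norm{u}{L^2(\Omega)}^2-\norm{\bar u}{L^2(\Omega)}^2)=\sigma\dual{\bar u}{u-\bar u}{L^2(\Omega)}+\tfrac\sigma2\norm{u-\bar u}{L^2(\Omega)}^2$, and again the quadratic term is $O(\abs{x-\bar x}{2}^2)$. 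So after discarding the provably-quadratic pieces, the task reduces to showing that the linear-looking quantity
\[
	x\cdot j'(\bar y)(y-\bar y)+\sigma\dual{\bar u}{u-\bar u}{L^2(\Omega)}
\]
is itself $O(\abs{x-\bar x}{2}^2)$. This is where the optimality conditions enter, and I expect this to be the main obstacle.

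To handle that term I would use the KKT system \eqref{eq:lower_level_KKT} at $\bar x$. From \eqref{eq:lower_level_KKT_y} at $\bar x$, $j'(\bar y)^\star\bar x=-\opA^\star\bar p$, so $x\cdot j'(\bar y)(y-\bar y)=\dual{j'(\bar y)^\star\bar x}{y-\bar y}{\YY}+\dual{j'(\bar y)^\star(x-\bar x)}{y-\bar y}{\YY}$; the second summand is $O(\abs{x-\bar x}{2}^2)$ by the Lipschitz bound on $\psi^y$ and boundedness of $j'\circ\psi^y$ on $\hat X$, while the first equals $-\dual{\bar p}{\opA(y-\bar y)}{\WW}=-\dual{\bar p}{\opB(u-\bar u)}{\WW}=-\dual{\opB^\star\bar p}{u-\bar u}{L^2(\Omega)}$ using $y-\bar y=(\opA^{-1}\circ\opB)(u-\bar u)$. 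Hence the quantity equals $\dual{\sigma\bar u-\opB^\star\bar p}{u-\bar u}{L^2(\Omega)}=\dual{-\bar\lambda}{u-\bar u}{L^2(\Omega)}$ by \eqref{eq:lower_level_KKT_u} at $\bar x$. Now the variational inequality $\bar\lambda\in\mathcal N_{U_\textup{ad}}(\bar u)$ gives $\dual{\bar\lambda}{u-\bar u}{L^2(\Omega)}\le 0$, i.e. $\dual{-\bar\lambda}{u-\bar u}{L^2(\Omega)}\ge 0$; symmetrically, running the same computation with the roles of $x$ and $\bar x$ interchanged (and using $\lambda\in\mathcal N_{U_\textup{ad}}(u)$) yields $\dual{-\lambda}{\bar u-u}{L^2(\Omega)}\ge 0$, that is $\dual{\lambda}{u-\bar u}{L^2(\Omega)}\ge 0$. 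Adding the two monotonicity inequalities bounds $\dual{-\bar\lambda}{u-\bar u}{L^2(\Omega)}$ from above by $\dual{\lambda-\bar\lambda}{u-\bar u}{L^2(\Omega)}$; but $\lambda-\bar\lambda=\phi^\lambda(x)-\phi^\lambda(\bar x)$, which is $O(\abs{x-\bar x}{2})$ by \Cref{lem:continuity_of_solution_operator}, and $u-\bar u$ is likewise $O(\abs{x-\bar x}{2})$, so the whole expression is $O(\abs{x-\bar x}{2}^2)$. Collecting all the $O(\abs{x-\bar x}{2}^2)$ contributions, with constants bounded uniformly on $\hat X$ by continuity of $C_y,C_u$, Lipschitzness of $\phi^\lambda$ on the compact set, and \Cref{lem:bound_ddj}, yields the claimed estimate; continuity of $\varphi'=j\circ\psi^y$ is immediate from continuity of $j$ and \Cref{lem:continuity_of_solution_operator}. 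The delicate point is precisely the sign bookkeeping in the two variational inequalities — one must be careful that the normal-cone inequality is used with the correct feasible competitor so that the cross term $\dual{\lambda-\bar\lambda}{u-\bar u}{L^2(\Omega)}$, not a term of indefinite sign, is what remains.
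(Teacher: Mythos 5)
Your argument is correct and is essentially the paper's own proof: both unwind the definition of $\varphi$, substitute the lower level KKT system, use the two normal-cone inequalities to reduce the critical term to $\dual{\lambda-\bar\lambda}{u-\bar u}{L^2(\Omega)}$, and conclude via the Lipschitz estimates of \Cref{lem:bound_ddj,lem:continuity_of_solution_operator}. The only (harmless) deviation is that you anchor the Taylor expansion and the KKT system at $\bar x$ instead of $x$, which produces the extra cross term $(x-\bar x)\cdot j'(\bar y)(y-\bar y)$ that you correctly absorb into the quadratic bound.
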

\begin{proof}
	The continuity of $\varphi'$ follows from \Cref{lem:continuity_of_solution_operator}.
	Now, let $\hat X \subset\R^n_+$ be compact.
	For arbitrary $\bar x, x \in \hat X$, we
	define $\bar y := \psi^y(\bar x)$, $\bar u := \psi^u(\bar x)$, 
	$y := \psi^y(x)$, and $u := \psi^u(x)$.
	Then, we have
	\begin{align*}
		&\varphi(x)
		-
		\varphi(\bar x)
		-
		\varphi'(\bar x) \cdot (x - \bar x)
		\\&\qquad
		=
		x\cdot j(y)+\tfrac\sigma 2\norm{u}{L^2(\Omega)}^2
		-
		\bar x\cdot j(\bar y)-\tfrac\sigma 2\norm{\bar u}{L^2(\Omega)}^2
		-
		(x - \bar x)\cdot j(\bar y)
		\\&\qquad
		=
		x\cdot(j(y) - j(\bar y))
		+\sigma \dual{u}{u - \bar u}{L^2(\Omega)}
		-\tfrac\sigma 2\norm{u - \bar u}{L^2(\Omega)}^2
		.
	\end{align*}
	Next, we are going to employ the optimality condition \eqref{eq:lower_level_KKT}.
	To this end, we denote the multipliers at the solution $(y, u)$ for the parameter $x$
	by $p \in \WW^\star$ and $\lambda \in L^2(\Omega)$.
	Now, \eqref{eq:lower_level_KKT} implies
	\begin{equation*}
		\sigma \dual{u}{u - \bar u}{L^2(\Omega)}
		=
		\dual{\mathtt B^\star p}{u - \bar u}{L^2(\Omega)}
		-
		\dual{\lambda}{u - \bar u}{L^2(\Omega)}
	\end{equation*}
	and
	\begin{align*}
		\dual{\mathtt B^\star p}{u - \bar u}{L^2(\Omega)}
		&=
		\dual{p}{\mathtt B (u - \bar u)}{\WW}
		=
		\dual{p}{\mathtt A (y - \bar y)}{\WW}
		=
		\dual{\mathtt A^\star p}{y - \bar y}{\mathcal Y}
		\\&
		=
		-
		\dual{j'(y)^\star x}{y - \bar y}{\mathcal Y}
		=
		-
		x\cdot j'(y) (y - \bar y)
		.
	\end{align*}
	If we denote by $\bar \lambda$ the multiplier associated to the parameter $\bar x$,
	\eqref{eq:lower_level_normal_cone} implies
	\begin{equation*}
		0
		\ge
		-\dual{\lambda}{u - \bar u}{L^2(\Omega)}
		\ge
		\dual{\bar\lambda - \lambda}{u - \bar u}{L^2(\Omega)}
		\ge
		-\norm{\lambda - \bar\lambda}{L^2(\Omega)} \, \norm{u - \bar u}{L^2(\Omega)}
		.
	\end{equation*}
	By combining the above estimates, we get
	\begin{align*}
		\bigl\lvert
			\varphi(x)
			-
			\varphi(\bar x)
			-
			\varphi'(\bar x) \cdot (x - \bar x)
		\bigr\rvert
		&\le
		\abs{x}{2}
		\abs{j(y) - j(\bar y) - j'(y) (y - \bar y)}{2}\\
		&\qquad 
		+
		\norm{\lambda - \bar\lambda}{L^2(\Omega)} \, \norm{u - \bar u}{L^2(\Omega)}
		+
		\tfrac\sigma 2\norm{u - \bar u}{L^2(\Omega)}^2
		.
	\end{align*}
	Now, the claim follows from \cref{lem:bound_ddj,lem:continuity_of_solution_operator}.
\end{proof}
Next, we employ Whitney's extension theorem
to extend $\varphi$ to all of $\R^n$.
\begin{lemma}
	\label{lem:whitney}
	There exists a continuously differentiable function $\hat\varphi \colon \R^n \to \R$
	such that
	$\hat\varphi(x) = \varphi(x)$
	and
	$\hat\varphi'(x) = \varphi'(x)$
	for all $x \in X_\textup{ad}$.
	Here, $\varphi'$ is the function defined in \cref{lem:differentiability_of_phi}.
\end{lemma}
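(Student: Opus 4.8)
The plan is to obtain $\hat\varphi$ as a direct application of Whitney's extension theorem in its first-order ($C^1$) form. Recall that this classical result asserts the following: given a closed set $A\subset\R^n$, a function $f\colon A\to\R$, and a continuous map $g\colon A\to\R^n$ such that the remainder
\[
	R(x,\bar x):=f(x)-f(\bar x)-g(\bar x)\cdot(x-\bar x)
\]
satisfies $\lvert R(x,\bar x)\rvert = o(\abs{x-\bar x}{2})$ uniformly for $x,\bar x$ ranging over compact subsets of $A$, there exists a function $F\in C^1(\R^n)$ with $F|_A=f$ and $F'|_A=g$. Everything needed to check the hypotheses of this theorem has already been assembled in \Cref{lem:differentiability_of_phi}, so the present lemma is essentially a repackaging of that result into a globally $C^1$ object (which will later allow the first-order optimality conditions of the relaxed single-level problem to be written on all of $\R^n$).

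Concretely, I would set $A:=X_\textup{ad}$, which is closed---indeed compact---by assumption, and take $f:=\varphi|_{X_\textup{ad}}$ together with $g:=\varphi'|_{X_\textup{ad}}$, where $\varphi'$ is the map introduced in \Cref{lem:differentiability_of_phi}. That lemma shows $\varphi'$ to be continuous, so $g$ is continuous on $A$. For the remainder condition I would invoke the Taylor-like estimate of \Cref{lem:differentiability_of_phi} with the compact set $\hat X:=X_\textup{ad}$: it produces a constant $C>0$ with $\lvert R(x,\bar x)\rvert=\bigl\lvert\varphi(x)-\varphi(\bar x)-\varphi'(\bar x)\cdot(x-\bar x)\bigr\rvert\le C\abs{x-\bar x}{2}^2$ for all $x,\bar x\in X_\textup{ad}$, hence $\lvert R(x,\bar x)\rvert/\abs{x-\bar x}{2}\le C\abs{x-\bar x}{2}\to 0$ as $\abs{x-\bar x}{2}\to 0$, and this convergence is uniform on $X_\textup{ad}\times X_\textup{ad}$ since $X_\textup{ad}$ is itself compact. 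Whitney's theorem then yields $\hat\varphi\in C^1(\R^n)$ with $\hat\varphi|_{X_\textup{ad}}=\varphi|_{X_\textup{ad}}$ and $\hat\varphi'|_{X_\textup{ad}}=\varphi'|_{X_\textup{ad}}$, which is the assertion.

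There is no genuine obstacle here; the only point deserving a word of care is that $\varphi$, and the candidate derivative $\varphi'$, are a priori defined only on the closed cone $\R^n_+$, and not on a neighbourhood of it in $\R^n$. This is why one must restrict to the compact set $X_\textup{ad}$ before appealing to Whitney's theorem---and the quadratic remainder bound of \Cref{lem:differentiability_of_phi}, valid precisely on compact subsets of $\R^n_+$, is exactly what licenses this restriction. (One could alternatively first extend $\varphi$ and $\varphi'$ continuously from $\R^n_+$ to $\R^n$ and apply Whitney's theorem on a compact neighbourhood of $X_\textup{ad}$, but restricting to $X_\textup{ad}$ at the outset is cleaner and suffices for everything that follows.)
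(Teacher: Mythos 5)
Your proposal is correct and follows essentially the same route as the paper: both apply the $C^1$ Whitney extension theorem on the compact set $X_\textup{ad}$ with candidate derivative $\varphi'$ and verify its hypothesis (stated in the paper via continuity of the remainder quotient, in your write-up via a uniform $o(\abs{x-\bar x}{2})$ condition --- equivalent formulations) using the quadratic Taylor-like estimate of \Cref{lem:differentiability_of_phi} together with the compactness of $X_\textup{ad}$. No gaps.
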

\begin{proof}
	In order to apply Whitney's extension theorem, see \cite[Theorem~2.3.6]{Hoermander2003},
	we have to show that the function $\eta \colon X_\textup{ad} \times X_\textup{ad} \to \R$,
	defined via
	\begin{equation*}
		\eta(x,y) = 0
		\quad\text{if } x = y,
		\qquad
		\eta(x,y)
		=
		\frac{\lvert\varphi(x) - \varphi(y) - \varphi'(y)(x-y)\rvert}{\abs{x -y }{\R^n}}
		\quad\text{if } x \ne y
	\end{equation*}
	for $x,y \in X_\textup{ad}$,
	is continuous on $X_\textup{ad} \times X_\textup{ad}$.
	It is clear that this function is continuous at $(x,y)\in X_\textup{ad}\times X_\textup{ad}$ for $x \ne y$.
	Hence, it remains to show
	\begin{equation*}
		\frac{\lvert\varphi(x) - \varphi(y) - \varphi'(y)(x-y)\rvert}{\abs{x -y }{2}}
		\to
		0
		\qquad\text{for } x,y \to a
	\end{equation*}
	for all $a \in X_\textup{ad}$, but
	this follows from \cref{lem:differentiability_of_phi}
	since $X_\textup{ad}$ is compact.
\end{proof}

Note that we extended $\varphi$ from $X_\textup{ad}$ to $\R^n$ in \Cref{lem:whitney}. 
Technically, this means that the extended function $\hat\varphi$ may possess
different values than the origin optimal value function $\varphi$ 
on $\R^n_+\setminus X_\textup{ad}$.
This, however, does not cause any trouble in the subsequent considerations since
we focus on parameters from $X_\textup{ad}$ only.

\subsection{The optimal value reformulation and its relaxation}\label{sec:upper_level}

Based on \Cref{lem:continuity_of_solution_operator}, the following result follows from 
\Cref{thm:existence_abstract_unique_lower_level} while noting that the upper level
variables are chosen from a finite-dimensional Banach space.
\begin{theorem}\label{thm:existence}
	Problem \eqref{eq:upper_level} possesses an optimal solution.
\end{theorem}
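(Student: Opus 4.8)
The plan is to apply \Cref{thm:existence_abstract_unique_lower_level} with the identifications $\XX := \R^n$ and $\ZZ := \YY \times L^2(\Omega)$, so that the candidate single-valued lower level solution operator is $\psi := (\psi^y,\psi^u)$. By \Cref{lem:existence_and_uniqueness_lower_level}, $\Psi(x) = \{\psi(x)\}$ holds for every $x \in \R^n_+$, in particular for every $x \in X_\textup{ad}$, which yields the single-valuedness hypothesis. It then remains to collect the remaining ingredients required by that theorem: $\R^n$ is a (finite-dimensional) Banach space, $\YY \times L^2(\Omega)$ is a Banach space, the functional $F$ is weakly sequentially lower semicontinuous on $\R^n \times \YY \times L^2(\Omega)$ since it is convex and continuous, and $X_\textup{ad}$ is nonempty and weakly sequentially compact because it is nonempty and compact in the finite-dimensional space $\R^n$.

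The one point that deserves a short argument is the requirement that $\psi$ maps weakly convergent sequences from $X_\textup{ad}$ to weakly convergent sequences in $\YY \times L^2(\Omega)$. Here I would use that, since $X_\textup{ad} \subset \R^n$ lies in a finite-dimensional space, weak and norm convergence of sequences in $X_\textup{ad}$ coincide; by \Cref{lem:continuity_of_solution_operator}, $\psi^y$ and $\psi^u$ are (Lipschitz) continuous on $X_\textup{ad}$, hence send norm-convergent sequences to norm-convergent sequences, and norm convergence implies weak convergence in $\YY$ and in $L^2(\Omega)$, respectively. Thus $\psi$ has the asserted property, \Cref{thm:existence_abstract_unique_lower_level} applies, and it produces an optimal solution $(\bar x, \psi^y(\bar x), \psi^u(\bar x))$ of \eqref{eq:upper_level}, where $\bar x$ minimizes $x \mapsto F(x, \psi^y(x), \psi^u(x))$ over $X_\textup{ad}$.

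I do not expect any genuine obstacle here: the substantive analytic work — solvability and uniqueness of the lower level problem and the continuity of its solution operator — has already been carried out in \Cref{lem:existence_and_uniqueness_lower_level,lem:continuity_of_solution_operator}. The only thing to be attentive to is matching the norm continuity provided by \Cref{lem:continuity_of_solution_operator} with the weak-to-weak continuity demanded by \Cref{thm:existence_abstract_unique_lower_level}, which is immediate precisely because the parameter set $X_\textup{ad}$ is finite-dimensional; one should also note that one need not verify the full \Cref{ass:abstract_model} (in particular the conditions on the lower level feasibility mapping and objective), since the proof of \Cref{thm:existence_abstract_unique_lower_level} only invokes the data $\XX$, $\ZZ$, $F$, $X_\textup{ad}$, and the reduction map $\psi$.
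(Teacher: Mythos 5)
Your argument is correct and is exactly the paper's own (one-line) proof: invoke \Cref{thm:existence_abstract_unique_lower_level} with $\psi=(\psi^y,\psi^u)$, using \Cref{lem:existence_and_uniqueness_lower_level} for single-valuedness, \Cref{lem:continuity_of_solution_operator} for continuity, and the finite-dimensionality of the parameter space to turn norm continuity into the required weak-to-weak property. Your closing remark that only the hypotheses actually used in the proof of \Cref{thm:existence_abstract_unique_lower_level} need to be checked (rather than all of \Cref{ass:abstract_model}) is a sensible and accurate reading of how the paper applies that theorem.
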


Our aim is to characterize the local minimizers of \eqref{eq:upper_level} by means of necessary
optimality conditions. In order to do so, we want to exploit the continuously differentiable
extension $\hat \varphi\colon\R^n\to\R$ of the optimal value function $\varphi$ associated 
with \eqref{eq:lower_level}, see \Cref{lem:whitney}.
Observing that $\hat\varphi(x)=\varphi(x)$ holds true for all $x\in X_\textup{ad}$,
\eqref{eq:upper_level} can be transferred into the equivalent problem
\begin{equation}\label{eq:OVR}\tag{OVR}
	\begin{split}
		F(x,y,u)&\,\to\,\min\limits_{x,y,u}\\
		x&\,\in\,X_\textup{ad}\\
		f(x,y,u)-\hat \varphi(x)&\,\leq\,0\\
		\opA y-\opB u&\,=\,0\\
		u&\,\in\,U_\textup{ad}
	\end{split}
\end{equation}
where $f$ still denotes the objective of the lower level problem \eqref{eq:lower_level}.
This is a single-level optimization
problem with continuously Fr\'{e}chet differentiable data functions, see \Cref{lem:whitney}.
However, it is easy to check that Robinson's constraint qualification does not hold at the feasible points of \eqref{eq:OVR},
see e.g.\ \cite[Lemma~5.1]{DempeHarderMehlitzWachsmuth2019}. This failure is mainly caused by the fact that
$f(x,y,u)-\hat \varphi(x)\leq 0$ is in fact an equality constraint by definition of the optimal value
function. Due to this lack of regularity, one cannot expect that the classical KKT
conditions provide a necessary optimality condition for \eqref{eq:OVR}.
Furthermore, the nonlinearity of $f$ provokes that the smooth mapping 
$\R^n\times\YY\times L^2(\Omega)\ni(x,y,u)\mapsto f(x,y,u)-\hat \varphi(x)\in\R$ may not serve as
an exact penalty function around local minimizers of \eqref{eq:OVR}.
Thus, approaches related to partial penalization w.r.t.\ the constraint $f(x,y,u)-\hat \varphi(x)\leq 0$,
see e.g.\ \cite{BenitaDempeMehlitz2016,BenitaMehlitz2016,Ye1995,Ye1997}, do not seem to be promising here.

In order to overcome these difficulties, we are going to relax this critical constraint.
More precisely, for a sequence $\{\varepsilon_k\}_{k\in\N}\subset\R$ of positive relaxation
parameters satisfying $\varepsilon_k\downarrow 0$, we investigate the programs
\begin{equation}\label{eq:OVR_relaxed}\tag{OVR$(\varepsilon_k)$}
	\begin{split}
		F(x,y,u)&\,\to\,\min\limits_{x,y,u}\\
		x&\,\in\,X_\textup{ad}\\
		f(x,y,u)-\hat\varphi(x)&\,\leq\,\varepsilon_k\\
		\opA y-\opB u&\,=\,0\\
		u&\,\in\,U_\textup{ad}.
	\end{split}
\end{equation}
One can easily check that this relaxation provokes regularity of all feasible points.
A formal proof of this result parallels the one of \cite[Lemma~5.2]{DempeHarderMehlitzWachsmuth2019}.

We first want to state an existence result for \eqref{eq:OVR_relaxed}.
\begin{lemma}\label{lem:existence_relaxation}
	For each $k\in\N$, \eqref{eq:OVR_relaxed} possesses an optimal solution.
\end{lemma}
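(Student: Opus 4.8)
The plan is to apply the direct method of the calculus of variations to \eqref{eq:OVR_relaxed}. First I would argue that the feasible set is nonempty: since $\bar x$ (a global minimizer of \eqref{eq:upper_level}, which exists by \Cref{thm:existence}) together with $(\psi^y(\bar x),\psi^u(\bar x))$ is feasible for \eqref{eq:OVR} by construction of $\hat\varphi$, and $\varepsilon_k > 0$, this point is also feasible for \eqref{eq:OVR_relaxed}. Hence a minimizing sequence $\{(x_k^j, y_k^j, u_k^j)\}_{j\in\N}$ exists, and since $F$ is bounded below along it (it converges to the infimum), I can pass to the relevant compactness arguments.

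Next I would extract convergent subsequences. The components $x_k^j$ lie in the compact set $X_\textup{ad}\subset\R^n$, so a subsequence converges to some $\bar x\in X_\textup{ad}$. For the controls, I would show $\{u_k^j\}_j$ is bounded in $L^2(\Omega)$: along a minimizing sequence, $F(x_k^j,y_k^j,u_k^j)$ is bounded, but $F$ alone need not control $\norm{u}{L^2(\Omega)}$; instead I would use the relaxed constraint $f(x_k^j,y_k^j,u_k^j)-\hat\varphi(x_k^j)\le\varepsilon_k$ together with $\hat\varphi$ continuous (hence bounded on the compact set $X_\textup{ad}$) and $x\cdot j(y)\ge 0$ on $\R^n_+\times\YY$, which gives $\tfrac\sigma2\norm{u_k^j}{L^2(\Omega)}^2 \le \hat\varphi(x_k^j)+\varepsilon_k + \text{const}$, a uniform bound. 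By reflexivity of $L^2(\Omega)$, a further subsequence satisfies $u_k^j\weakly\bar u$. Since $y_k^j = \opA^{-1}\opB u_k^j = \opS u_k^j$ and $\opB$ is compact while $\opA^{-1}$ is continuous, $\opS$ is a compact operator, so $y_k^j\to\bar y := \opS\bar u$ strongly in $\YY$. In particular the state equation $\opA\bar y-\opB\bar u=0$ holds in the limit.

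Then I would verify feasibility of the limit and lower semicontinuity of the objective. The set $U_\textup{ad}$ is convex and closed, hence weakly sequentially closed, so $\bar u\in U_\textup{ad}$. For the relaxed inequality constraint: $f(x,y,u) = x\cdot j(y) + \tfrac\sigma2\norm{u}{L^2(\Omega)}^2$; the map $(x,y)\mapsto x\cdot j(y)$ is continuous under $x_k^j\to\bar x$, $y_k^j\to\bar y$, and $u\mapsto\tfrac\sigma2\norm{u}{L^2(\Omega)}^2$ is convex and continuous, hence weakly sequentially lower semicontinuous, so $f(\bar x,\bar y,\bar u)\le\liminf_j f(x_k^j,y_k^j,u_k^j)$; combined with $\hat\varphi(x_k^j)\to\hat\varphi(\bar x)$, this yields $f(\bar x,\bar y,\bar u)-\hat\varphi(\bar x)\le\varepsilon_k$. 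Thus $(\bar x,\bar y,\bar u)$ is feasible for \eqref{eq:OVR_relaxed}. Finally, $F$ is continuous and convex, hence weakly sequentially lower semicontinuous on $\R^n\times\YY\times L^2(\Omega)$, so $F(\bar x,\bar y,\bar u)\le\liminf_j F(x_k^j,y_k^j,u_k^j)$, which equals the infimum; therefore $(\bar x,\bar y,\bar u)$ is an optimal solution of \eqref{eq:OVR_relaxed}.

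The main obstacle I anticipate is the coercivity step for the controls: one must notice that the objective $F$ by itself carries no guarantee of coercivity in $u$ (it is only assumed convex and continuously differentiable), so the a priori bound on $\{u_k^j\}_j$ has to be squeezed out of the relaxed constraint using $j(\YY)\subset\R^n_+$ and $x\in\R^n_+$ to discard the sign-indefinite term $x\cdot j(y)$ — this is exactly the same mechanism used in the proof of \Cref{lem:existence_and_uniqueness_lower_level}. The rest is routine once the compactness of $\opS$ is invoked to upgrade weak convergence of the controls to strong convergence of the states.
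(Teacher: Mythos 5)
Your proposal is correct and follows essentially the same route as the paper's proof: a minimizing sequence, compactness of $X_\textup{ad}$, the bound $\tfrac\sigma2\norm{u}{L^2(\Omega)}^2\le\hat\varphi(x)+\varepsilon_k$ extracted from the relaxed constraint via $x\cdot j(y)\ge 0$, compactness of $\opA^{-1}\circ\opB$ to get strong state convergence, and weak sequential lower semicontinuity to pass to the limit in the constraint and in $F$. The only (harmless) addition is your explicit nonemptiness argument, which the paper leaves implicit.
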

\begin{proof}
	Let $\{(x_l,y_l,u_l)\}_{l\in\N}\subset\R^n\times\YY\times L^2(\Omega)$ be a 
	minimizing sequence of \eqref{eq:OVR_relaxed}, i.e.\ a sequence of feasible points
	whose associated objective values tend to the infimal value $\alpha\in\overline\R$ of
	\eqref{eq:OVR_relaxed}. The compactness of $X_\textup{ad}$ 
	implies that $\{x_l\}_{l\in\N}$ is bounded
	and, thus, converges along a subsequence (without relabeling) 
	to $\bar x\in X_\textup{ad}$. By feasibility, we have
	\[
		\tfrac\sigma2\norm{u_l}{L^2(\Omega)}^2
		\leq
		x_l\cdot j(y_l)+\tfrac\sigma2\norm{u_l}{L^2(\Omega)}^2
		\leq
		\varepsilon_k+\hat\varphi(x_l)
	\]
	for each $l\in\N$. By boundedness of $\{x_l\}_{l\in\N}$ and 
	continuity of $\hat \varphi$ on $X_\textup{ad}$,	
	we obtain boundedness of
	$\{u_l\}_{l\in\N}$. Consequently, the latter converges weakly 
	(along a subsequence without relabeling) to	$\bar u\in L^2(\Omega)$ 
	which belongs to the weakly sequentially closed set $U_\textup{ad}$.
	Since $\opB$ is compact, we have $y_l\to\bar y$ in $\mathcal Y$
	by validity of the state equation.
	Here, we used $\bar y:=(\opA^{-1}\circ\opB)\bar u$.
	
	Recall that $j$ and $\hat\varphi$ are continuous functions.
	Exploiting the weak sequential
	lower semicontinuity of (squared) norms, we obtain
	\begin{align*}
		f(\bar x,\bar y,\bar u)-\hat\varphi(\bar x)
		&\leq
		\lim\limits_{l\to\infty}x_l\cdot j(y_l)
			+\liminf\limits_{l\to\infty}\tfrac\sigma2\norm{u_l}{L^2(\Omega)}^2
			+\lim\limits_{l\to\infty}(-\hat\varphi)(x_l)\\
		&=
			\liminf\limits_{l\to\infty}
				\left(	x_l\cdot j(y_l)
						+\tfrac\sigma2\norm{u_l}{L^2(\Omega)}^2
						-\hat \varphi(x_l)
				\right)
		\leq
			\varepsilon_k,
	\end{align*}
	i.e.\ $(\bar x,\bar y,\bar u)$ is feasible to \eqref{eq:OVR_relaxed}.
	Finally, the weak sequential lower semicontinuity of $F$ yields
	\[
		F(\bar x,\bar y,\bar u)
		\leq
		\liminf\limits_{l\to\infty}F(x_l,y_l,u_l)
		\leq\alpha,
	\]
	i.e.\ $(\bar x,\bar y,\bar u)$ is a global minimizer of \eqref{eq:OVR_relaxed}.
\end{proof}

Next, we investigate the behavior of a sequence $\{(\bar x_k,\bar y_k,\bar u_k)\}_{k\in\N}$
of global minimizers associated with \eqref{eq:OVR_relaxed} as $k\to\infty$.
\begin{theorem}\label{thm:relaxation_behavior}
	For each $k\in\N$, let $(\bar x_k,\bar y_k,\bar u_k)\in \R^n\times\YY\times L^2(\Omega)$
	be a global minimizer of \eqref{eq:OVR_relaxed}.
	Then, the sequence $\{(\bar x_k,\bar y_k,\bar u_k)\}_{k\in\N}$ possesses a subsequence (without
	relabeling) such that the convergences $\bar x_k\to\bar x$, $\bar y_k\to\bar y$, and $\bar u_k\to\bar u$
	hold where $(\bar x,\bar y,\bar u)$ is a global minimizer of \eqref{eq:OVR} and, thus,
	of \eqref{eq:upper_level}.
\end{theorem}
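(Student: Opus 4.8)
The plan is to run the classical argument for relaxation (or penalization) schemes: first extract convergent subsequences via compactness, then verify that the limit point is feasible for \eqref{eq:OVR}, and finally use that the feasible set of \eqref{eq:OVR} is contained in that of every \eqref{eq:OVR_relaxed} in order to conclude global optimality of the limit. Most of the estimates needed here already appear in the proof of \Cref{lem:existence_relaxation}, so I would reuse them essentially verbatim.

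First I would extract the subsequences. Since $X_\textup{ad}$ is compact, $\{\bar x_k\}_{k\in\N}$ has a subsequence (not relabeled) with $\bar x_k\to\bar x\in X_\textup{ad}$. Feasibility of $(\bar x_k,\bar y_k,\bar u_k)$ for \eqref{eq:OVR_relaxed}, together with $X_\textup{ad}\subset\R^n_+$ and $j(\YY)\subset\R^n_+$, gives $\tfrac\sigma2\norm{\bar u_k}{L^2(\Omega)}^2\le\hat\varphi(\bar x_k)+\varepsilon_k$ exactly as in \Cref{lem:existence_relaxation}; since $\hat\varphi$ is continuous and $\varepsilon_k\downarrow 0$, the sequence $\{\bar u_k\}_{k\in\N}$ is bounded in $L^2(\Omega)$, hence (along a further subsequence) $\bar u_k\weakly\bar u$ for some $\bar u\in U_\textup{ad}$, the set $U_\textup{ad}$ being weakly sequentially closed. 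The compactness of $\opB$ and the continuous invertibility of $\opA$ then yield $\bar y_k=(\opA^{-1}\circ\opB)\bar u_k\to(\opA^{-1}\circ\opB)\bar u=:\bar y$ strongly in $\YY$, and $\opA\bar y-\opB\bar u=0$.

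Next I would check that $(\bar x,\bar y,\bar u)$ is feasible for \eqref{eq:OVR}. By continuity of $j$ and of $\hat\varphi$ (\Cref{lem:whitney}), together with the strong convergences $\bar x_k\to\bar x$ and $\bar y_k\to\bar y$, one has $\bar x_k\cdot j(\bar y_k)\to\bar x\cdot j(\bar y)$ and $\hat\varphi(\bar x_k)\to\hat\varphi(\bar x)$, while weak sequential lower semicontinuity of $\norm{\cdot}{L^2(\Omega)}^2$ gives $\norm{\bar u}{L^2(\Omega)}^2\le\liminf_{k\to\infty}\norm{\bar u_k}{L^2(\Omega)}^2$. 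Passing to the limit in $f(\bar x_k,\bar y_k,\bar u_k)-\hat\varphi(\bar x_k)\le\varepsilon_k$ and using $\varepsilon_k\downarrow 0$ therefore yields $f(\bar x,\bar y,\bar u)-\hat\varphi(\bar x)\le0$; combined with $\bar x\in X_\textup{ad}$, $\bar u\in U_\textup{ad}$ and the state equation, $(\bar x,\bar y,\bar u)$ is feasible for \eqref{eq:OVR}. For optimality, note that every feasible point $(x,y,u)$ of \eqref{eq:OVR} satisfies $f(x,y,u)-\hat\varphi(x)\le0\le\varepsilon_k$ and is hence feasible for \eqref{eq:OVR_relaxed}; since \eqref{eq:OVR} is nonempty (take $(x,\psi^y(x),\psi^u(x))$ for any $x\in X_\textup{ad}$), its optimal value is finite and $F(\bar x_k,\bar y_k,\bar u_k)\le F(x,y,u)$ holds for all such $(x,y,u)$ and all $k$. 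As $F$ is continuous and convex, it is weakly sequentially lower semicontinuous, so $F(\bar x,\bar y,\bar u)\le\liminf_{k\to\infty}F(\bar x_k,\bar y_k,\bar u_k)\le F(x,y,u)$. Hence $(\bar x,\bar y,\bar u)$ is a global minimizer of \eqref{eq:OVR} and, by the equivalence of \eqref{eq:OVR} and \eqref{eq:upper_level}, also of \eqref{eq:upper_level}.

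The only step that requires genuine care is the limit passage in the relaxed constraint: since it contains the term $-\hat\varphi(\bar x_k)$ with a minus sign, one needs the correct one-sided semicontinuity for each summand. This causes no difficulty here because $\hat\varphi$ is continuous and $\bar x_k$, $\bar y_k$ converge strongly, so that only the convex quadratic control term remains, for which weak lower semicontinuity points in the favourable direction; had $\hat\varphi$ merely been upper semicontinuous, an obstruction would appear. Apart from this point, the proof is a routine combination of the arguments already used for \Cref{lem:existence_relaxation} and \Cref{thm:existence_abstract_unique_lower_level}.
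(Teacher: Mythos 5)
There is a genuine gap: the theorem asserts \emph{strong} convergence $\bar u_k\to\bar u$ in $L^2(\Omega)$ (the paper's arrow $\to$ denotes norm convergence), but your argument only delivers $\bar u_k\weakly\bar u$ along a subsequence and never upgrades it. This is not a cosmetic issue. The strong convergence is precisely what is used later (in \Cref{lem:convergence_of_multipliers} and \Cref{lem:CSt}, e.g.\ for the pointwise a.e.\ convergence of $\bar u_k$ and for passing to limits in the complementarity-type terms), and it is part of the statement you are asked to prove. The paper obtains it by a different mechanism: writing $u_k:=\psi^u(\bar x_k)$, the strong convexity (modulus $\sigma$) of the lower level objective in $u$ together with the lower level optimality of $(y_k,u_k)$ gives
\begin{equation*}
	\tfrac\sigma2\norm{\bar u_k-\psi^u(\bar x_k)}{L^2(\Omega)}^2
	\le f(\bar x_k,\bar y_k,\bar u_k)-\hat\varphi(\bar x_k)\le\varepsilon_k ,
\end{equation*}
and then the Lipschitz/continuity of $\psi^u$ from \Cref{lem:continuity_of_solution_operator} yields $\bar u_k\to\psi^u(\bar x)=\bar u$ in norm, with $\bar y_k\to\bar y$ following via $\opA^{-1}\circ\opB$. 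Note also that in the paper the limit is \emph{defined} as the lower level solution at $\bar x$, so feasibility of $(\bar x,\bar y,\bar u)$ for \eqref{eq:OVR} is automatic, whereas you recover it by a limit passage.

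Your route can be repaired without abandoning it: after your feasibility step you in fact know $f(\bar x,\bar y,\bar u)\le\hat\varphi(\bar x)=\varphi(\bar x)$, and since $(\bar y,\bar u)$ is lower level feasible also $f(\bar x,\bar y,\bar u)\ge\varphi(\bar x)$, hence equality. Taking $\limsup$ in $\bar x_k\cdot j(\bar y_k)+\tfrac\sigma2\norm{\bar u_k}{L^2(\Omega)}^2\le\hat\varphi(\bar x_k)+\varepsilon_k$ and using $\bar x_k\cdot j(\bar y_k)\to\bar x\cdot j(\bar y)$ then gives $\limsup_{k\to\infty}\norm{\bar u_k}{L^2(\Omega)}\le\norm{\bar u}{L^2(\Omega)}$, which together with weak convergence and weak lower semicontinuity of the norm yields norm convergence, and weak convergence plus convergence of norms in the Hilbert space $L^2(\Omega)$ implies $\bar u_k\to\bar u$ strongly. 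Without this (or the paper's strong-convexity estimate), your proof establishes a weaker statement than the theorem. The remaining parts of your argument (extraction of $\bar x_k\to\bar x$, strong convergence of the states via compactness of $\opB$, and the optimality transfer using that every \eqref{eq:OVR}-feasible point is \eqref{eq:OVR_relaxed}-feasible) are correct and coincide with the paper's reasoning.
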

\begin{proof}
	Due to compactness of $X_\textup{ad}$, 
	the sequence $\{\bar x_k\}_{k\in\N}$ is bounded and converges 
	along a subsequence (without relabeling) to some $\bar x\in X_\textup{ad}$.
	We set $\bar y:=\psi^y(\bar x)$ and $\bar u:=\psi^u(\bar x)$.
	
	Let us set $y_k:=\psi^y(\bar x_k)$ and $u_k:=\psi^u(\bar x_k)$ for each $k\in\N$.
	Due to the componentwise convexity and differentiability of the mapping $j$, 
	we obtain
	\begin{align*}
		\tfrac\sigma2\norm{\bar u_k-u_k}{L^2(\Omega)}^2
		&=
		\tfrac\sigma2\norm{\bar u_k}{L^2(\Omega)}^2
			-\dual{\bar u_k-u_k}{\sigma u_k}{L^2(\Omega)}
			-\tfrac\sigma 2\norm{u_k}{L^2(\Omega)}^2\\
		&\leq
			\bar x_k\cdot(j(\bar y_k)-j(y_k)
			-j'(y_k)(\bar y_k-y_k))\\
		&\qquad
			+ \tfrac\sigma2\norm{\bar u_k}{L^2(\Omega)}^2
			-\dual{\bar u_k-u_k}{\sigma u_k}{L^2(\Omega)}
			-\tfrac\sigma 2\norm{u_k}{L^2(\Omega)}^2\\
		&=
			f(\bar x_k,\bar y_k,\bar u_k)-\hat\varphi(\bar x_k)\\
		&\qquad
			-\bar x_k\cdot j'(y_k)(\bar y_k-y_k)
			-\dual{\bar u_k-u_k}{\sigma u_k}{L^2(\Omega)}\\
		&\leq
			f(\bar x_k,\bar y_k,\bar u_k)-\hat \varphi(\bar x_k)\leq\varepsilon_k.			
	\end{align*}
	Here, we used feasibility of $(\bar y_k,\bar u_k)$ and optimality
	of $(y_k,u_k)$ for \eqref{eq:lower_level} where $x:=\bar x_k$ holds.
	
	The above considerations as well as the continuity of $\psi^u$ yield
	\begin{align*}
			0
		&\leq
			\lim\limits_{k\to\infty}\norm{\bar u_k-\bar u}{L^2(\Omega)}
		\leq
			\lim\limits_{k\to\infty}\left(\norm{\bar u_k-u_k}{L^2(\Omega)}
				+\norm{u_k-\bar u}{L^2(\Omega)}\right)\\
		&=
			\lim\limits_{k\to\infty}\left(\norm{\bar u_k-u_k}{L^2(\Omega)}
				+\norm{\psi^u(\bar x_k)-\psi^u(\bar x)}{L^2(\Omega)}\right)\\
		&\leq
			\lim\limits_{k\to\infty}\left(\sqrt{2\varepsilon_k/\sigma}+\norm{\psi^u(\bar x_k)-\psi^u(\bar x)}{L^2(\Omega)}\right)
		=
			0,
	\end{align*}
	i.e.\ $\bar u_k\to\bar u$ follows. Due to $\bar y_k=(\opA^{-1}\circ\opB)\bar u_k$ and
	$\bar y=(\opA^{-1}\circ\opB)\bar u$, we also have $\bar y_k\to\bar y$.
	
	Since each feasible point $(x,y,u)\in\R^n\times\YY\times L^2(\Omega)$ of \eqref{eq:OVR}
	is a feasible point of \eqref{eq:OVR_relaxed} for arbitrary $k\in\N$, we have
	$F(\bar x_k,\bar y_k,\bar u_k)\leq F(x,y,u)$ for all $k\in\N$. Taking the limit $k\to\infty$
	while observing that $F$ is (weakly sequentially) lower semicontinuous, we have
	$F(\bar x,\bar y,\bar u)\leq F(x,y,u)$, i.e.\ $(\bar x,\bar y,\bar u)$ is a global minimizer
	of \eqref{eq:OVR} and, thus, of \eqref{eq:upper_level}.
\end{proof}

Clearly, the above theorem is of limited use for the numerical treatment of \eqref{eq:upper_level}
since the programs \eqref{eq:OVR_relaxed} are nonconvex optimal control problems whose
constraints comprise the implicitly known function $\hat\varphi$. 
However, we can exploit \Cref{thm:relaxation_behavior} for the derivation of a necessary optimality
condition for \eqref{eq:upper_level}.

\subsection{Derivation of stationarity conditions}

For the derivation of a necessary optimality condition which characterizes the local minimizers
of \eqref{eq:upper_level}, we will exploit the relaxation approach described in \Cref{sec:upper_level}.
Combining the KKT systems of \eqref{eq:OVR_relaxed} and \eqref{eq:lower_level} will lead
to a stationarity system for global minimizers of \eqref{eq:upper_level}.
Afterwards, this result can be extended to all local minimizers of \eqref{eq:upper_level}. 

As already mentioned in \cref{sec:upper_level}, we cannot rely on the KKT conditions of \eqref{eq:OVR} to be applicable
necessary optimality conditions for \eqref{eq:upper_level}.
In order to derive a \emph{reasonable} stationarity system, we first observe that for given $x\in X_\textup{ad}$,
we can characterize $(\psi^y(x),\psi^u(x))$ to be the uniquely determined solution of the KKT system
\eqref{eq:lower_level_KKT}
associated with \eqref{eq:lower_level}. Plugging this system into the constraints of 
\eqref{eq:upper_level} in order to eliminate the implicit constraint $(y,u)\in\Psi(x)$, we arrive at
the associated KKT reformulation
\begin{equation}\label{eq:KKT_reformulation}
	\begin{split}
	F(x,y,u)&\,\to\,\min\limits_{x,y,u,p,\lambda}\\
		x&\,\in\,X_\textup{ad}\\
		\opA y-\opB u&\,=\,0\\
		j'(y)^\star x+\opA^\star p&\,=\,0\\
		\sigma u-\opB^\star p+\lambda&\,=\,0\\
		(u,\lambda)&\,\in\,\gph\mathcal N_{U_\textup{ad}}
	\end{split}
\end{equation}
where $\mathcal N_{U_\textup{ad}}\colon L^2(\Omega)\rightrightarrows L^2(\Omega)$ denotes
the normal cone mapping associated with $U_\textup{ad}$. A simple calculation shows
\[
	\gph\mathcal N_{U_\textup{ad}}
	=
	\left\{
		(u,\lambda)\in U_\textup{ad}\times L^2(\Omega)\,\middle|\,
		\begin{aligned}
			&\lambda\geq 0\quad\text{a.e.\ on }\{\omega\in\Omega\,|\,u(\omega)>u_a(\omega)\}\\
			&\lambda\leq 0\quad\text{a.e.\ on }\{\omega\in\Omega\,|\,u(\omega)<u_b(\omega)\}
		\end{aligned}
	\right\}.
\]
In order to infer a stationarity system for \eqref{eq:upper_level}, we compute the roots
of the partial derivatives of the MPCC-Lagrangian associated with \eqref{eq:KKT_reformulation}.
The properties of the multipliers which address the equilibrium condition 
$(u,\lambda)\in\gph\mathcal N_{U_\textup{ad}}$ are motivated by the pointwise structure
of this set and the theory on finite-dimensional complementarity problems.
\begin{definition}\label{def:C_stationarity}
	A feasible point $(\bar x,\bar y,\bar u)\in \R^n\times\YY\times L^2(\Omega)$ of \eqref{eq:upper_level}
	is said to be weakly stationary (W-stationary) whenever there exist multipliers $\bar z\in\R^n$,
	$\bar\mu\in\YY$, $\bar p,\bar\rho\in\WW^\star$, and $\bar\lambda,\bar w,\bar\xi\in L^2(\Omega)$
	which satisfy
	\begin{subequations}
		\begin{align}
		\label{eq:CSt_x}
			0&\,=\,F'_x(\bar x,\bar y,\bar u)+\bar z
				+j'(\bar y)\bar\mu,\\
		\label{eq:CSt_y}
			0&\,=\,F'_y(\bar x,\bar y,\bar u)+\opA^\star\bar\rho
				+j''(\bar y)(\bar\mu)^\star\bar x,\\
		\label{eq:CSt_u}
			0&\,=\,F'_u(\bar x,\bar y,\bar u)+\sigma\bar w-\opB^\star\bar\rho+\bar\xi,\\
		\label{eq:CSt_p}
			0&\,=\,\opA\bar\mu-\opB\bar w,\\
		\label{eq:CSt_z}
			\bar z&\,\in\,\mathcal N_{X_\textup{ad}}(\bar x),\\
		\label{eq:CSt_ll_y}
			0&\,=\,j'(\bar y)^\star\bar x+\opA^\star \bar p\\
		\label{eq:CSt_ll_u}
			0&\,=\,\sigma \bar u-\opB^\star \bar p+\bar \lambda\\
		\label{eq:CSt_ll_lambda_a}
			\bar\lambda&\,\geq\,0\quad\text{a.e.\ on }I^{a+}(\bar u),\\
		\label{eq:CSt_ll_lambda_b}
			\bar\lambda&\,\leq\,0\quad\text{a.e.\ on }I^{b-}(\bar u),\\
		\label{eq:CSt_xi}
			\bar\xi&\,=\,0\quad\text{a.e.\ on }I^{a+}(\bar u)\cap I^{b-}(\bar u),\\
		\label{eq:CSt_w}
			\bar w&\,=\,0\quad\text{a.e.\ on }\{\omega\in\Omega\,|\,\bar\lambda(\omega)\neq 0\}.
		\end{align}
	\end{subequations}
	Whenever these multipliers additionally satisfy
	\begin{equation}
		\label{eq:CSt_clarke}
			\bar\xi\bar w\,\geq\,0\quad\text{a.e.\ on }\Omega,
	\end{equation}
	$(\bar x,\bar y,\bar u)$ is called Clarke-stationary (C-stationary).
	If \eqref{eq:CSt_clarke} can be strengthened to
	\[
		\begin{aligned}
			&\bar\xi\,\leq\,0\,\land\,\bar w\,\leq\,0&
				&\text{a.e.\ on }\{\omega\in\Omega\,|\,\bar\lambda(\omega)=0\,
									\land\,\bar u(\omega)=u_a(\omega)\},&\\
			&\bar\xi\,\geq\,0\,\land\,\bar w\,\geq\,0&
				&\text{a.e.\ on }\{\omega\in\Omega\,|\,\bar\lambda(\omega)=0\,
									\land\,\bar u(\omega)=u_b(\omega)\},&
		\end{aligned}
	\]
	then $(\bar x,\bar y,\bar u)$ is referred to as strongly stationary (S-stationary).
	Here, the measurable sets $I^{a+}(\bar u)$ and $I^{b-}(\bar u)$ are given by
	\[
		I^{a+}(\bar u):=\{\omega\in\Omega\,|\,\bar u(\omega)>u_a(\omega)\},
		\qquad
		I^{b-}(\bar u):=\{\omega\in\Omega\,|\,\bar u(\omega)<u_b(\omega)\}.
	\]
	Note that all subsets of $\Omega$ appearing above are well-defined 
	up to subsets of $\Omega$ possessing measure zero.
\end{definition}

Observe that the conditions \eqref{eq:CSt_ll_y} - \eqref{eq:CSt_ll_lambda_b} just provide the
KKT system \eqref{eq:lower_level_KKT} of \eqref{eq:lower_level} for $x:=\bar x$ which characterizes
the associated lower level Lagrange multipliers $\bar p$ and $\bar\lambda$. This way, a feasible
point of \eqref{eq:KKT_reformulation} is fixed and the actual respective 
W-, C-, and S-stationarity conditions can be inferred.

In line with the results from \cite{DempeHarderMehlitzWachsmuth2019,HarderWachsmuth2019}, we are
going to show that the local minimizers of \eqref{eq:upper_level} are C-stationary.
In order to do that, we choose an arbitrary sequence $\{\varepsilon_k\}_{k\in\N}\subset\R$
of positive penalty parameters tending to zero as $k\to\infty$. Due to \Cref{lem:existence_relaxation},
the program \eqref{eq:OVR_relaxed} possesses a global minimizer 
$(\bar x_k,\bar y_k,\bar u_k)\in\R^n\times\YY\times L^2(\Omega)$. As we mentioned in \Cref{sec:upper_level},
\eqref{eq:OVR_relaxed} is regular as well as smooth at this point and, thus, we find multipliers $z_k\in\R^n$,
$\alpha_k\in\R$, $p_k\in\WW^\star$, and $\lambda_k\in L^2(\Omega)$ which solve the associated KKT system
\begin{subequations}
		\begin{align}
		\label{eq:KKT_relaxed_x}
			0&\,=\,F'_x(\bar x_k,\bar y_k,\bar u_k)+z_k
				+\alpha_k(j(\bar y_k)-\hat\varphi'(\bar x_k)),\\
		\label{eq:KKT_relaxed_y}
			0&\,=\,F'_y(\bar x_k,\bar y_k,\bar u_k)+\alpha_k j'(\bar y_k)^\star\bar x_k+\opA^\star p_k,\\
		\label{eq:KKT_relaxed_u}
			0&\,=\,F'_u(\bar x_k,\bar y_k,\bar u_k)
				+\alpha_k\sigma\bar u_k-\opB^\star p_k+\lambda_k,\\
		\label{eq:KKT_relaxed_z}
			z_k&\,\in\,\mathcal N_{X_\textup{ad}}(\bar x_k),\\
		\label{eq:KKT_relaxed_alpha}
			0&\,\leq\,\alpha_k\,\perp\,f(\bar x_k,\bar y_k,\bar u_k)-\hat \varphi(\bar x_k)-\varepsilon_k,\\
		\label{eq:KKT_relaxed_lambda}
			\lambda_k&\,\in\,\mathcal N_{U_\textup{ad}}(\bar u_k).
		\end{align}
\end{subequations}
Furthermore, an evaluation of the lower level KKT system \eqref{eq:lower_level_KKT} yields
\begin{subequations}
	\begin{align}
		\label{eq:KKT_ll_y}
			0&\,=\,j'(\psi^y(\bar x_k))^\star\bar x_k+\opA^\star\phi^p(\bar x_k),\\
		\label{eq:KKT_ll_u}
			0&\,=\,\sigma\psi^u(\bar x_k)-\opB^\star\phi^p(\bar x_k)+\phi^\lambda(\bar x_k),\\
		\label{eq:KKT_ll_lambda}
			\phi^\lambda(\bar x_k)&\,\in\,\mathcal N_{U_\textup{ad}}(\psi^u(\bar x_k)).
	\end{align}
\end{subequations}
Recall that $\phi^p\colon \R^n_+\to\WW^\star$ and $\phi^\lambda\colon \R^n_+\to L^2(\Omega)$ 
denote the Lagrange multiplier mappings
associated with the lower level problem \eqref{eq:lower_level} which are continuous due to 
\Cref{lem:continuity_of_solution_operator}.

Due to \Cref{thm:relaxation_behavior}, we may assume that we have $\bar x_k\to\bar x$, $\bar y_k\to\bar y$,
and $\bar u_k\to\bar u$ where $(\bar x,\bar y,\bar u)\in\R^n\times\YY\times L^2(\Omega)$ is a global minimizer of
\eqref{eq:upper_level}.

Summarizing all these assumptions, we obtain the following results.
\begin{lemma}\label{lem:convergence_of_multipliers}
	There exist $\bar z\in\R^n$, $\bar\mu\in\YY$, $\bar\rho\in\WW^\star$, and $\bar w,\bar\xi\in L^2(\Omega)$
	such that the convergences
	\begin{subequations}
		\begin{align}
			\label{eq:multipliers_z}
				z_k&\,\to\,\bar z\quad\text{in}\,\R^n,\\
			\label{eq:multipliers_mu}
				\alpha_k(\bar y_k-\psi^y(\bar x_k))&\,\to\,\bar\mu\quad\text{in}\,\YY,\\
			\label{eq:multipliers_w}
				\alpha_k(\bar u_k-\psi^u(\bar x_k))&\,\weakly\,\bar w\quad\text{in}\,L^2(\Omega),\\
			\label{eq:multipliers_rho}
				p_k-\alpha_k\phi^p(\bar x_k)&\,\to\,\bar\rho\quad\text{in}\,\WW^\star,\\
			\label{eq:multipliers_xi}
				\lambda_k-\alpha_k\phi^\lambda(\bar x_k)&\,\weakly\,\bar\xi\quad\text{in}\,L^2(\Omega)
		\end{align}
	\end{subequations}
	hold at least along a subsequence. 
	Furthermore, the above limits satisfy the conditions
	\eqref{eq:CSt_x}, \eqref{eq:CSt_y}, \eqref{eq:CSt_u}, \eqref{eq:CSt_p}, and \eqref{eq:CSt_z}.
\end{lemma}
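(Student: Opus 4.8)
The plan is to subtract $\alpha_k$ times the lower level system \eqref{eq:KKT_ll_y}--\eqref{eq:KKT_ll_u} from the relaxed system \eqref{eq:KKT_relaxed_y}--\eqref{eq:KKT_relaxed_u}, to combine the outcome with \eqref{eq:KKT_relaxed_x}, and to pass to the limit. Abbreviating the quantities that will occur in \eqref{eq:multipliers_mu}--\eqref{eq:multipliers_xi} by
\[
	\hat\mu_k:=\alpha_k\bigl(\bar y_k-\psi^y(\bar x_k)\bigr),\quad
	\hat w_k:=\alpha_k\bigl(\bar u_k-\psi^u(\bar x_k)\bigr),\quad
	\hat\rho_k:=p_k-\alpha_k\phi^p(\bar x_k),\quad
	\hat\xi_k:=\lambda_k-\alpha_k\phi^\lambda(\bar x_k),
\]
the two state equations give $\bar y_k-\psi^y(\bar x_k)=(\opA^{-1}\circ\opB)(\bar u_k-\psi^u(\bar x_k))$, hence $\opA\hat\mu_k=\opB\hat w_k$ and $\norm{\hat\mu_k}{\YY}\le c_1\norm{\hat w_k}{L^2(\Omega)}$ with $c_1:=\norm{\opA^{-1}\circ\opB}{\linop{L^2(\Omega)}{\YY}}$. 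Using $\hat\varphi'(\bar x_k)=\varphi'(\bar x_k)=j(\psi^y(\bar x_k))$ on $X_\textup{ad}$ (see \Cref{lem:differentiability_of_phi,lem:whitney} and $\bar x_k\in X_\textup{ad}$), the differentiability of $F$, and the fact that $j$ is $C^2$ --- so that $\alpha_k(j(\bar y_k)-j(\psi^y(\bar x_k)))$ and $\alpha_k(j'(\bar y_k)-j'(\psi^y(\bar x_k)))$ are mean values of $j'$ and $j''$, respectively, along the segment from $\psi^y(\bar x_k)$ to $\bar y_k$, evaluated at $\hat\mu_k$ --- the above difference reduces, for every $k$, to
\[
	z_k=-F'_x(\bar x_k,\bar y_k,\bar u_k)-\alpha_k\bigl(j(\bar y_k)-j(\psi^y(\bar x_k))\bigr),\qquad
	\hat\xi_k=-F'_u(\bar x_k,\bar y_k,\bar u_k)-\sigma\hat w_k+\opB^\star\hat\rho_k,
\]
together with $\opA^\star\hat\rho_k=-F'_y(\bar x_k,\bar y_k,\bar u_k)-\bigl(\alpha_k(j'(\bar y_k)-j'(\psi^y(\bar x_k)))\bigr)^\star\bar x_k$. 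Since $\opA^\star$ is boundedly invertible and $j',j''$ are continuous, these identities show that $z_k$, $\hat\mu_k$, $\hat\rho_k$, and $\hat\xi_k$ are all controlled by $\norm{\hat w_k}{L^2(\Omega)}$ up to additive constants, so everything reduces to establishing boundedness of $\{\hat w_k\}_{k\in\N}$ in $L^2(\Omega)$.

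This boundedness is the main obstacle. First I would test the reduced equation for $\hat\xi_k$ with $\hat w_k$. Since $\lambda_k\in\mathcal N_{U_\textup{ad}}(\bar u_k)$ and $\phi^\lambda(\bar x_k)\in\mathcal N_{U_\textup{ad}}(\psi^u(\bar x_k))$ (cf.\ \eqref{eq:KKT_relaxed_lambda}, \eqref{eq:KKT_ll_lambda}), while $\bar u_k,\psi^u(\bar x_k)\in U_\textup{ad}$ and $\alpha_k\ge0$, the monotonicity of $\mathcal N_{U_\textup{ad}}$ yields
\[
	\dual{\hat\xi_k}{\hat w_k}{L^2(\Omega)}
	=\alpha_k\dual{\lambda_k}{\bar u_k-\psi^u(\bar x_k)}{L^2(\Omega)}
	-\alpha_k^2\dual{\phi^\lambda(\bar x_k)}{\bar u_k-\psi^u(\bar x_k)}{L^2(\Omega)}\ge0 .
\]
On the other hand, using $\opB\hat w_k=\opA\hat\mu_k$ together with the reduced $y$-equation,
\[
	\dual{\opB^\star\hat\rho_k}{\hat w_k}{L^2(\Omega)}
	=\dual{\hat\rho_k}{\opA\hat\mu_k}{\WW}
	=\dual{\opA^\star\hat\rho_k}{\hat\mu_k}{\YY}
	=-\dual{F'_y(\bar x_k,\bar y_k,\bar u_k)}{\hat\mu_k}{\YY}
	-\alpha_k^2\,\bar x_k\cdot\bigl((j'(\bar y_k)-j'(\psi^y(\bar x_k)))(\bar y_k-\psi^y(\bar x_k))\bigr),
\]
where the last term is nonnegative: the component functions of $j$ are convex, hence their gradients are monotone, so $(j'(\bar y_k)-j'(\psi^y(\bar x_k)))(\bar y_k-\psi^y(\bar x_k))\in\R^n_+$, and $\bar x_k\in\R^n_+$. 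Dropping this term and inserting the two displays into the $\hat\xi_k$-equation tested with $\hat w_k$ gives
\[
	\sigma\norm{\hat w_k}{L^2(\Omega)}^2
	\le-\dual{F'_u(\bar x_k,\bar y_k,\bar u_k)}{\hat w_k}{L^2(\Omega)}-\dual{F'_y(\bar x_k,\bar y_k,\bar u_k)}{\hat\mu_k}{\YY}
	\le\bigl(\norm{F'_u(\bar x_k,\bar y_k,\bar u_k)}{L^2(\Omega)}+c_1\norm{F'_y(\bar x_k,\bar y_k,\bar u_k)}{\YY^\star}\bigr)\norm{\hat w_k}{L^2(\Omega)} .
\]
As $(\bar x_k,\bar y_k,\bar u_k)\to(\bar x,\bar y,\bar u)$ by \Cref{thm:relaxation_behavior} and $F$ is continuously differentiable, the $F'$-factor on the right is bounded, so $\{\hat w_k\}_{k\in\N}$ is bounded; the needed coercivity is supplied by $\sigma>0$. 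This is exactly the point where the convexity of the component functions of $j$ and the sign condition $X_\textup{ad}\subset\R^n_+$ are essential.

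Finally, I would pass to the limit. Boundedness of $\{\hat w_k\}_{k\in\N}$ in the Hilbert space $L^2(\Omega)$ allows me to extract a subsequence (not relabeled) with $\hat w_k\weakly\bar w$, which is \eqref{eq:multipliers_w}; compactness of $\opB$ upgrades this to $\opB\hat w_k\to\opB\bar w$ in $\WW$, whence $\hat\mu_k=(\opA^{-1}\circ\opB)\hat w_k\to\bar\mu:=(\opA^{-1}\circ\opB)\bar w$ in $\YY$, i.e.\ \eqref{eq:multipliers_mu}, and the limit of $\opA\hat\mu_k=\opB\hat w_k$ is \eqref{eq:CSt_p}. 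Since $\bar y_k\to\bar y$ and $\psi^y(\bar x_k)\to\psi^y(\bar x)=\bar y$ (by \Cref{thm:relaxation_behavior,lem:continuity_of_solution_operator}) and $j\in C^2$, the mean value representations give $\alpha_k(j(\bar y_k)-j(\psi^y(\bar x_k)))\to j'(\bar y)\bar\mu$ in $\R^n$ and $\alpha_k(j'(\bar y_k)-j'(\psi^y(\bar x_k)))\to j''(\bar y)\bar\mu$ in $\linop{\YY}{\R^n}$. Combining this with the continuity of $F'$ and the reduced identities, I obtain $z_k\to\bar z:=-F'_x(\bar x,\bar y,\bar u)-j'(\bar y)\bar\mu$ (which is \eqref{eq:multipliers_z}), then $\opA^\star\hat\rho_k\to-F'_y(\bar x,\bar y,\bar u)-(j''(\bar y)\bar\mu)^\star\bar x$ in $\YY^\star$, and therefore, by bounded invertibility of $\opA^\star$, $\hat\rho_k\to\bar\rho:=-(\opA^\star)^{-1}\bigl(F'_y(\bar x,\bar y,\bar u)+(j''(\bar y)\bar\mu)^\star\bar x\bigr)$ in $\WW^\star$ (which is \eqref{eq:multipliers_rho}), and finally, since then $\opB^\star\hat\rho_k\to\opB^\star\bar\rho$ strongly while $\hat w_k\weakly\bar w$, $\hat\xi_k\weakly\bar\xi:=-F'_u(\bar x,\bar y,\bar u)-\sigma\bar w+\opB^\star\bar\rho$ in $L^2(\Omega)$ (which is \eqref{eq:multipliers_xi}). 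By construction $(\bar z,\bar\mu,\bar\rho,\bar w,\bar\xi)$ satisfies \eqref{eq:CSt_x}, \eqref{eq:CSt_y}, and \eqref{eq:CSt_u}, while \eqref{eq:CSt_z} follows from $z_k\in\mathcal N_{X_\textup{ad}}(\bar x_k)$, $z_k\to\bar z$, $\bar x_k\to\bar x$, and the closedness of $\gph\mathcal N_{X_\textup{ad}}$.
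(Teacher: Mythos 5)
Your proposal is correct and follows essentially the same route as the paper: you form the same difference system by subtracting $\alpha_k$ times the lower level KKT conditions from the relaxed KKT conditions, derive boundedness of $\alpha_k(\bar u_k-\psi^u(\bar x_k))$ by testing with the $u$-difference and exploiting the monotonicity of $\mathcal N_{U_\textup{ad}}$, the monotonicity of $j'$ (componentwise convexity with $\bar x_k\in\R^n_+$), and the coercivity supplied by $\sigma>0$, and then pass to the limit using the compactness of $\opB$ and the bounded invertibility of $\opA^\star$. The only cosmetic deviations are that you test with the rescaled direction $\alpha_k(\bar u_k-\psi^u(\bar x_k))$ and use mean-value integral representations of $j$ and $j'$ where the paper invokes strict differentiability; both are equivalent here.
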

\begin{proof}
	We multiply \eqref{eq:KKT_ll_y} by $\alpha_k$ and subtract the resulting equation
	from \eqref{eq:KKT_relaxed_y} in order to obtain
	\begin{equation}\label{eq:difference_y}
		0\,=\,F'_y(\bar x_k,\bar y_k,\bar u_k)
			+\alpha_k(j'(\bar y_k)-j'(\psi^y(\bar x_k)))^\star\bar x_k
			+\opA^\star(p_k-\alpha_k\phi^p(\bar x_k)).
	\end{equation}
	Testing this equation with $\bar y_k-\psi^y(\bar x_k)$ while noticing that the first-order derivative 
	of a convex function is a monotone operator, we have
	\begin{align*}
		&\dual{F'_y(\bar x_k,\bar y_k,\bar u_k)+\opA^\star(p_k-\alpha_k\phi^p(\bar x_k))}{\bar y_k-\psi^y(\bar x_k)}{\YY}\\
		&\qquad=
			-\alpha_k\dual{(j'(\bar y_k)-j'(\psi^y(\bar x_k)))^\star\bar x_k}{\bar y_k-\psi^y(\bar x_k)}{\YY}
		\leq 0.
	\end{align*}
	This is used to obtain
	\begin{align*}
		&\dual{\opB^\star(p_k-\alpha_k\phi^p(\bar x_k))}{\bar u_k-\psi^u(\bar x_k)}{L^2(\Omega)}\\
		&\qquad=
			\dual{p_k-\alpha_k\phi^p(\bar x_k)}{\opB(\bar u_k-\psi^u(\bar x_k))}{\WW}\\
		&\qquad=
			\dual{p_k-\alpha_k\phi^p(\bar x_k)}{\opA(\bar y_k-\psi^y(\bar x_k))}{\WW}\\
		&\qquad=
			\dual{\opA^\star(p_k-\alpha_k\phi^p(\bar x_k))}{\bar y_k-\psi^y(\bar x_k)}{\YY}\\
		&\qquad\leq
			\dual{-F'_y(\bar x_k,\bar y_k,\bar u_k)}{\bar y_k-\psi^y(\bar x_k)}{\YY}\\
		&\qquad=
			\dual{-F'_y(\bar x_k,\bar y_k,\bar u_k)}{(\opA^{-1}\circ\opB)(\bar u_k-\psi^u(\bar x_k))}{\YY}\\
		&\qquad\leq
			C\norm{\bar u_k-\psi^u(\bar x_k)}{L^2(\Omega)}
	\end{align*}
	for some constant $C>0$ since $\{F'_y(\bar x_k,\bar y_k,\bar u_k)\}_{k\in\N}$ is bounded.
	Next, we multiply \eqref{eq:KKT_ll_u} by $\alpha_k$ and subtract this from \eqref{eq:KKT_relaxed_u}
	in order to obtain
	\begin{equation}\label{eq:difference_u}
	\begin{aligned}
		0&\,=\,F'_u(\bar x_k,\bar y_k,\bar u_k)
		+\alpha_k\sigma(\bar u_k-\psi^u(\bar x_k))\\
		&\qquad\qquad
		-\opB^\star(p_k-\alpha_k\phi^p(\bar x_k))
		+\lambda_k-\alpha_k\phi^\lambda(\bar x_k)	.	
	\end{aligned}
	\end{equation}
	Testing this with $\bar u_k-\psi^u(\bar x_k)$ and exploiting the above estimate as well as
	 the definition of the normal cone, we obtain
	\begin{align*}
		&\alpha_k\sigma\norm{\bar u_k-\psi^u(\bar x_k)}{L^2(\Omega)}^2\\
		&\qquad=
			\dual{-F'_u(\bar x_k,\bar y_k,\bar u_k)+\opB^\star(p_k-\alpha_k\phi^p(\bar x_k))}{\bar u_k-\psi^u(\bar x_k)}{L^2(\Omega)}\\
		&\qquad\qquad
			+\dual{\alpha_k\phi^\lambda(\bar x_k)-\lambda_k}{\bar u_k-\psi^u(\bar x_k)}{L^2(\Omega)}\\
		&\qquad\leq
			\dual{-F'_u(\bar x_k,\bar y_k,\bar u_k)+\opB^\star(p_k-\alpha_k\phi^p(\bar x_k))}{\bar u_k-\psi^u(\bar x_k)}{L^2(\Omega)}\\
		&\qquad\leq
			\hat C\norm{\bar u_k-\psi^u(\bar x_k)}{L^2(\Omega)}		
	\end{align*}
	for a constant $\hat C>0$.
	Consequently, the sequence $\{\alpha_k(\bar u_k-\psi^u(\bar x_k))\}_{k\in\N}$
	is bounded and, therefore, possesses a weakly convergent subsequence (without relabelling)
	whose weak limit will be denoted by $\bar w$. Thus, we have shown \eqref{eq:multipliers_w}.
	Due to the relation $\bar y_k-\psi^y(\bar x_k)=(\opA^{-1}\circ\opB)(\bar u_k-\psi^u(\bar x_k))$
	and the compactness of $\opB$, we
	obtain the strong convergence $\alpha_k(\bar y_k-\psi^y(\bar x_k))\to\bar\mu$ for some
	$\bar\mu\in\YY$ satisfying \eqref{eq:CSt_p}. Thus, we have \eqref{eq:multipliers_mu}.
	
	Since $j$ is assumed to be continuously Fr\'{e}chet differentiable, $j$ is strictly differentiable. 
	Noting that the strong convergences $\bar y_k\to\bar y$ and $\psi^y(\bar x_k)\to\bar y$ hold, 
	we have
	\[
		\frac{j(\bar y_k)-j(\psi^y(\bar x_k))
			-j'(\bar y)(\bar y_k-\psi^y(\bar x_k))}
			{\norm{\bar y_k-\psi^y(\bar x_k)}{\YY}}
		\to 0.
	\]
	Observing that $\{\alpha_k(\bar y_k-\psi^y(\bar x_k))\}_{k\in\N}$ is particularly bounded, we obtain
	\[
		\alpha_k\left(j(\bar y_k)-j(\psi^y(\bar x_k))
			-j'(\bar y)(\bar y_k-\psi^y(\bar x_k))\right)
		\to 0.
	\]
	Since the convergence
	$j'(\bar y)(\alpha_k(\bar y_k-\psi^y(\bar x_k)))\to j'(\bar y)\bar\mu$
	is clear from \eqref{eq:multipliers_mu}, we infer 
	\begin{equation}\label{eq:convergence_first_derivative}
		\alpha_k(j(\bar y_k)-j(\psi^y(\bar x_k)))
		\to
		j'(\bar y)\bar\mu.
	\end{equation}
	Observing that $j$ is twice continuously Fr\'{e}chet differentiable,
	$j'$ is strictly differentiable. Thus, we can reprise the above arguments
	in order to show the convergence
	\begin{equation}\label{eq:convergence_second_oder_derivative_y}
				\alpha_k(j'(\bar y_k)-j'(\psi^y(\bar x_k)))^\star\bar x_k
				\to
				j''(\bar y)(\bar\mu)^\star\bar x.
	\end{equation}
		Next, we combine \eqref{eq:difference_y}, \eqref{eq:convergence_second_oder_derivative_y}, 
		and the fact that $\opA^\star$ is continuously invertible
		in order to obtain \eqref{eq:multipliers_rho} for some $\bar\rho\in\WW^\star$
		(along a subsequence) which satisfies \eqref{eq:CSt_y}.
		Now, we can infer \eqref{eq:multipliers_xi} for some $\bar\xi\in L^2(\Omega)$ 
		from \eqref{eq:difference_u} in a similar way. Taking the weak limit in \eqref{eq:difference_u}
		yields \eqref{eq:CSt_u}. Due to \Cref{lem:differentiability_of_phi,lem:whitney},
		\eqref{eq:KKT_relaxed_x} is equivalent to
		\[
			0\,=\,F'_x(\bar x_k,\bar y_k,\bar u_k)
				+z_k
				+\alpha_k(j(\bar y_k)-j(\psi^y(\bar x_k))).
		\]
		Due to the convergences $F'_x(\bar x_k,\bar y_k,\bar u_k)\to F'_x(\bar x,\bar y,\bar u)$
		and \eqref{eq:convergence_first_derivative}, we infer \eqref{eq:multipliers_z}
		for some $\bar z\in\R^n$ along a subsequence. 
		Particularly, we have \eqref{eq:CSt_x}. Finally, \eqref{eq:CSt_z} follows
		by definition of the normal cone while observing $z_k\to\bar z$ and $\bar x_k\to\bar x$.
		This completes the proof.
\end{proof}

In the subsequent lemma, we characterize the multipliers $\bar w$ and $\bar\xi$ from
\Cref{lem:convergence_of_multipliers} in more detail.
\begin{lemma}\label{lem:CSt}
	Let $\bar w,\bar\xi\in L^2(\Omega)$ be the multipliers characterized in
	\Cref{lem:convergence_of_multipliers}.
	Then, \eqref{eq:CSt_xi}, \eqref{eq:CSt_w}, and \eqref{eq:CSt_clarke} hold.
\end{lemma}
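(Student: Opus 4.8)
\noindent
The plan is to work throughout with the sequences $a_k := \lambda_k - \alpha_k\phi^\lambda(\bar x_k)$ and $b_k := \alpha_k(\bar u_k - \psi^u(\bar x_k))$, which satisfy $a_k \weakly \bar\xi$ and $b_k \weakly \bar w$ in $L^2(\Omega)$ by \Cref{lem:convergence_of_multipliers}, and to set $\bar\lambda := \phi^\lambda(\bar x)$ as in \Cref{def:C_stationarity}. Recalling that $\bar u_k \to \bar u$ (by \Cref{thm:relaxation_behavior}) and $\psi^u(\bar x_k) \to \psi^u(\bar x) = \bar u$, $\phi^\lambda(\bar x_k) \to \bar\lambda$ in $L^2(\Omega)$ (by \Cref{lem:continuity_of_solution_operator}), I would pass to a subsequence along which all four convergences also hold pointwise a.e. Two facts will be used repeatedly. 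First, since $\lambda_k \in \mathcal N_{U_\textup{ad}}(\bar u_k)$ and $\alpha_k\phi^\lambda(\bar x_k) \in \mathcal N_{U_\textup{ad}}(\psi^u(\bar x_k))$ (a cone, with $\alpha_k \ge 0$), while $\bar u_k,\psi^u(\bar x_k) \in U_\textup{ad}$, the pointwise description of $\gph\mathcal N_{U_\textup{ad}}$ yields $\lambda_k(\bar u_k-\psi^u(\bar x_k)) \ge 0$ and $\phi^\lambda(\bar x_k)(\bar u_k-\psi^u(\bar x_k)) \le 0$ a.e.\ on $\Omega$; multiplying these by $\alpha_k$ and $\alpha_k^2$ and subtracting gives the pointwise inequality $a_kb_k \ge 0$ a.e. Second, rearranging \eqref{eq:difference_u} yields $a_k+\sigma b_k = \opB^\star(p_k-\alpha_k\phi^p(\bar x_k)) - F'_u(\bar x_k,\bar y_k,\bar u_k)$, so by \eqref{eq:multipliers_rho} and the continuity of $\opB^\star$ and $F'$ the combination $a_k+\sigma b_k$ converges \emph{strongly} in $L^2(\Omega)$, necessarily to $\bar\xi+\sigma\bar w$ in view of \eqref{eq:CSt_u}.

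I would derive \eqref{eq:CSt_clarke} from these two facts by a localized weak--strong argument: for an arbitrary measurable $M \subseteq \Omega$ one has $\int_M a_kb_k\dx = \dual{(a_k+\sigma b_k)\chi_M}{b_k}{L^2(\Omega)} - \sigma\int_M b_k^2\dx$; here the first term converges to $\int_M(\bar\xi+\sigma\bar w)\bar w\dx$ (strong times weak) and $\liminf_k\int_M b_k^2\dx \ge \int_M\bar w^2\dx$ by weak lower semicontinuity of the $L^2(M)$-norm, while the left-hand side is nonnegative, so passing to the limit superior gives $\int_M\bar\xi\bar w\dx \ge 0$. Since $M$ is arbitrary, choosing $M=\{\bar\xi\bar w<0\}$ forces $\bar\xi\bar w \ge 0$ a.e.

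For \eqref{eq:CSt_xi} I would localize to $\Omega_m:=\{\omega\in\Omega\,|\,u_a(\omega)+1/m<\bar u(\omega)<u_b(\omega)-1/m\}$, which increases to $I^{a+}(\bar u)\cap I^{b-}(\bar u)$ as $m\to\infty$. From the structure of $\gph\mathcal N_{U_\textup{ad}}$, $\lambda_k$ vanishes off $\{\bar u_k\in\{u_a,u_b\}\}$ and $\phi^\lambda(\bar x_k)$ vanishes off $\{\psi^u(\bar x_k)\in\{u_a,u_b\}\}$; restricted to $\Omega_m$ these two sets are contained in $\{|\bar u_k-\bar u|\ge 1/m\}$, resp.\ $\{|\psi^u(\bar x_k)-\bar u|\ge 1/m\}$, whose measures tend to $0$ by Chebyshev's inequality together with $L^2$-convergence. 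Hence $a_k\chi_{\Omega_m}$ is supported on a set of vanishing measure, and since $\{a_k\}$ is bounded in $L^2(\Omega)$, absolute continuity of the integral gives $a_k\chi_{\Omega_m}\weakly 0$; comparing with $a_k\chi_{\Omega_m}\weakly\bar\xi\chi_{\Omega_m}$ yields $\bar\xi=0$ a.e.\ on $\Omega_m$, hence on $I^{a+}(\bar u)\cap I^{b-}(\bar u)$.

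I expect \eqref{eq:CSt_w} to be the main obstacle, since one must show that $\bar w$ \emph{vanishes} (not merely has a definite sign) on the strongly active set. Fix $\epsilon>0$, put $E_\epsilon:=\{\bar\lambda\ge\epsilon\}$, and note that $\bar u=u_b$ (finite) a.e.\ on $E_\epsilon$ because $\bar\lambda\in\mathcal N_{U_\textup{ad}}(\bar u)$. After a further exhaustion isolating the portion of $E_\epsilon$ on which $u_b-u_a$ is bounded away from $0$, followed by an application of Egorov's theorem, one obtains for every $\delta>0$ a set $E\subseteq E_\epsilon$ with $|E_\epsilon\setminus E|<\delta$ on which $\bar u_k\to u_b$, $\psi^u(\bar x_k)\to u_b$ and $\phi^\lambda(\bar x_k)\to\bar\lambda$ converge uniformly; then for large $k$ one has $\psi^u(\bar x_k)=u_b$ and $u_a<\bar u_k\le u_b$ on $E$, so $b_k=\alpha_k(\bar u_k-u_b)\le 0$ there and, wherever $b_k\ne 0$, also $u_a<\bar u_k<u_b$, whence $\lambda_k=0$ and $|a_k|=\alpha_k\phi^\lambda(\bar x_k)\ge\tfrac\epsilon2\alpha_k$. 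The crucial estimate is then the pointwise domination $|b_k|\le\tfrac2\epsilon(u_b-\bar u_k)|a_k|$ on $E$: since $u_b-\bar u_k\to 0$ uniformly on $E$ and $\{a_k\}$ is bounded in $L^2(\Omega)$, this forces $\norm{b_k}{L^2(E)}\to 0$, and combined with $b_k\weakly\bar w$ it gives $\bar w=0$ a.e.\ on $E$; letting $\delta\downarrow 0$ and then $\epsilon\downarrow 0$ yields $\bar w=0$ a.e.\ on $\{\bar\lambda>0\}$, and the case $\{\bar\lambda<0\}$ (where $\bar u=u_a$) is symmetric, so \eqref{eq:CSt_w} follows. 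The recurring difficulty is that on the strongly active set $\phi^\lambda(\bar x_k)$ stays bounded away from zero while $u_b-\bar u_k\to 0$, which is what lets the bounded sequence $a_k$ control $b_k$; the weak--strong splitting in the second preliminary fact is the analogous engine behind \eqref{eq:CSt_clarke}.
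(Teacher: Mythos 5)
Your argument is correct and, for \eqref{eq:CSt_xi} and \eqref{eq:CSt_clarke}, essentially follows the paper's route: the paper likewise exploits that $a_k:=\lambda_k-\alpha_k\phi^\lambda(\bar x_k)$ vanishes wherever both $\bar u_k$ and $\psi^u(\bar x_k)$ lie strictly between the bounds (it passes to pointwise a.e.\ convergent subsequences instead of your Chebyshev/convergence-in-measure argument, an immaterial difference), and it obtains the Clarke sign condition from the same ingredients you use, namely $\int_G a_k b_k\,\mathrm{d}\omega\ge 0$ for arbitrary measurable $G$, the strong convergence of the combination coming from \eqref{eq:difference_u} together with \eqref{eq:multipliers_rho}, and weak sequential lower semicontinuity of the localized quadratic term. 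Where you genuinely diverge is \eqref{eq:CSt_w}: the paper splits into the cases $\{\alpha_k\}_{k\in\N}$ bounded (then $\bar w=0$ outright) and $\alpha_k\to\infty$, in which case it shows $\alpha_k^{-1}\lambda_k\to\bar\lambda$ strongly and pairs the two signed normal-cone inequalities with $\alpha_k\chi_G(\bar u_k-\psi^u(\bar x_k))$ to conclude $\dual{\bar\lambda}{\chi_G\bar w}{L^2(\Omega)}=0$ for every measurable $G$, i.e.\ $\bar\lambda\bar w=0$ a.e.; you avoid this dichotomy via an Egorov-type localization of $\{\bar\lambda\ge\epsilon\}$ and the pointwise domination $|b_k|\le\tfrac2\epsilon(u_b-\bar u_k)|a_k|$, which forces $b_k\to 0$ strongly in $L^2$ on such sets. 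Both routes are sound; the paper's is shorter and needs no Egorov/exhaustion machinery, while yours is more elementary and makes visible that only the behaviour near the strongly active set matters. One small point to complete your \eqref{eq:CSt_w} step: the exhaustion you describe only covers the portion of $E_\epsilon$ where $u_b-u_a$ is positive; on $E_\epsilon\cap\{\omega\in\Omega\,|\,u_a(\omega)=u_b(\omega)\}$ all feasible controls coincide, so $b_k=0$ a.e.\ there and $\bar w=0$ follows trivially --- this remark should be added so that the limits $\delta\downarrow 0$, $\epsilon\downarrow 0$ indeed give $\bar w=0$ a.e.\ on $\{\bar\lambda\neq 0\}$.
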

\begin{proof}
	Due to the strong convergence of $\{\bar u_k\}_{k\in\N}$ and $\{\psi^u(\bar x_k)\}_{k\in\N}$
	to $\bar u$ in $L^2(\Omega)$, these convergences hold pointwise almost everywhere on $\Omega$ along
	a subsequence (without relabelling). From $\lambda_k\in\mathcal N_{U_\textup{ad}}(\bar u_k)$
	and $\phi^\lambda(\bar x_k)\in\mathcal N_{U_\textup{ad}}(\psi^u(\bar x_k))$, we have
	\[
		\lambda_k-\alpha_k\phi^\lambda(\bar x_k)=0
		\qquad
		\text{a.e.\ on}
		\qquad
		\left\{\omega\in\Omega\,\middle|\,
			\begin{aligned}
				&u_a(\omega)<\bar u_k(\omega)<u_b(\omega)\\
				&u_a(\omega)<\psi^u(\bar x_k)(\omega)<u_b(\omega)
			\end{aligned}
		\right\}
	\]
	by definition of the normal cone.
	The aforementioned pointwise a.e.\ convergence yields 
	$\lambda_k(\omega)-\alpha_k\phi^\lambda(\bar x_k)(\omega)\to 0$
	for almost every $\omega\in I^{a+}(\bar u)\cap I^{b-}(\bar u)$.
	Since we already have $\lambda_k-\alpha_k\phi^\lambda(\bar x_k)\weakly\bar\xi$
	from \eqref{eq:multipliers_xi}, \eqref{eq:CSt_xi} follows.
	
	Next, we show \eqref{eq:CSt_w}. If $\{\alpha_k\}_{k\in\N}$ is bounded, then
	$\bar w=0$ follows from \eqref{eq:multipliers_w} and \eqref{eq:CSt_w} holds
	trivially. Thus, we assume $\alpha_k\to +\infty$.
	By continuity of $\phi^p$ and $\phi^\lambda$, see \Cref{lem:continuity_of_solution_operator},
	we have $\phi^p(\bar x_k)\to\phi^p(\bar x)$ and
	$\phi^\lambda(\bar x_k)\to\phi^\lambda(\bar x)$.
	Noting that the lower level Lagrange multipliers are uniquely determined while
	observing that $\psi^y(\bar x_k)\to\bar y$ and $\psi^u(\bar x_k)\to\bar u$ hold,
	we have $\phi^p(\bar x_k)\to\bar p$ and $\phi^\lambda(\bar x_k)\to\bar\lambda$.
	Here, $\bar p\in\WW^\star$ and $\bar\lambda\in L^2(\Omega)$ satisfy 
	the conditions \eqref{eq:CSt_ll_y}, \eqref{eq:CSt_ll_u}, \eqref{eq:CSt_ll_lambda_a},
	and \eqref{eq:CSt_ll_lambda_b}.
	Thus, \eqref{eq:multipliers_xi} yields the strong
	convergence $\alpha_k^{-1}\lambda_k\to\bar\lambda$.
	Let $G\subset\Omega$ be measurable and $\chi_G\in L^\infty(\Omega)$ be its characteristic
	function which equals $1$ on $G$ and vanishes on $\Omega\setminus G$.
	By definition of the normal cone, we have
	\[
		\dual{\alpha^{-1}_k\lambda_k}{\alpha_k\chi_G(\bar u_k-\psi^u(\bar x_k))}{L^2(\Omega)}\geq 0,
		\quad
		\dual{\phi^\lambda(\bar x_k)}{\alpha_k\chi_G(\bar u_k-\psi^u(\bar x_k))}{L^2(\Omega)}\leq 0.
	\]
	Taking the limit $k\to\infty$, we thus obtain $\dual{\bar\lambda}{\chi_G\bar w}{L^2(\Omega)}=0$.
	Since $G\subset\Omega$ was chosen arbitrarily, \eqref{eq:CSt_w} follows.

	Finally, we are going to prove \eqref{eq:CSt_clarke}.
	Therefore, we fix an arbitrary measurable set $G\subset\Omega$.
	We first observe that
	due to \eqref{eq:multipliers_w} and \eqref{eq:multipliers_rho}, we have
	\[
		\dual{\opB^\star(p_k-\alpha_k\phi^p(\bar x_k))}{\alpha_k\chi_G(\bar u_k-\psi^u(\bar x_k))}{L^2(\Omega)}
		\to
		\dual{\opB^\star\bar\rho}{\chi_G\bar w}{L^2(\Omega)}.
	\]
	Now, we can exploit \eqref{eq:CSt_u}, \eqref{eq:difference_u},
	the weak sequential lower semicontinuity of the map 
	$L^2(\Omega)\ni u\mapsto \dual{\sigma u}{\chi_Gu}{L^2(\Omega)}\in\R$,
	and the definition of the normal cone in order to obtain
	\begin{align*}
		&\dual{-\bar\xi}{\chi_G\bar w}{L^2(\Omega)}
		=
		\dual{F'_u(\bar x,\bar y,\bar u)-\opB^\star\bar\rho}
			{\chi_G\bar w}{L^2(\Omega)}
			+\dual{\sigma\bar w}{\chi_G\bar w}{L^2(\Omega)}\\
		&\qquad\leq
			\lim\limits_{k\to\infty}
				\dual{F'_u(\bar x_k,\bar y_k,\bar u_k)-\opB^\star(p_k-\alpha_k\phi^\lambda(\bar x_k))}
				{\alpha_k\chi_G(\bar u_k-\psi^u(\bar x_k))}{L^2(\Omega)}\\
		&\qquad\qquad
			+\liminf\limits_{k\to\infty}
				\dual{\sigma\alpha_k(\bar u_k-\psi^u(\bar x_k))}{\alpha_k\chi_G(\bar u_k-\psi^u(\bar x_k))}{L^2(\Omega)}\\
		&\qquad=\liminf\limits_{k\to\infty}
			\dual{\alpha_k\phi^\lambda(\bar x_k)-\lambda_k}{\alpha_k\chi_G(\bar u_k-\psi^u(\bar x_k))}{L^2(\Omega)}
		\leq 0.
	\end{align*}
	Noting that $G\subset\Omega$ has been chosen arbitrarily, \eqref{eq:CSt_clarke} follows.
\end{proof}

Above, we have shown that the particular global minimizer $(\bar x,\bar y,\bar u)$ which results
from the relaxation approach suggested in \Cref{sec:upper_level} is C-stationary.
In order to carry over this analysis to arbitrary local minimizers of \eqref{eq:upper_level},
we exploit a localization argument.
\begin{theorem}\label{thm:CSt}
	Let $(\bar x,\bar y,\bar u)\in\R^n\times\YY\times L^2(\Omega)$ be a local minimizer of \eqref{eq:upper_level}.
	Then, it is C-stationary.
\end{theorem}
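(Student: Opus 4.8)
The plan is to reduce the assertion for an arbitrary local minimizer to the global case already settled by \Cref{lem:convergence_of_multipliers,lem:CSt} via a localization argument. Let $(\bar x,\bar y,\bar u)\in\R^n\times\YY\times L^2(\Omega)$ be a local minimizer of \eqref{eq:upper_level}. By \Cref{lem:existence_and_uniqueness_lower_level,lem:continuity_of_solution_operator}, the feasible set of \eqref{eq:upper_level} equals $\{(x,\psi^y(x),\psi^u(x))\,|\,x\in X_\textup{ad}\}$ with $\psi^y,\psi^u$ continuous on $X_\textup{ad}$, so there is some $\delta>0$ with $F(\bar x,\bar y,\bar u)\le F(x,\psi^y(x),\psi^u(x))$ for all $x\in X_\textup{ad}$ satisfying $\abs{x-\bar x}{2}\le\delta$. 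I would then introduce the auxiliary bilevel optimal control problem obtained from \eqref{eq:upper_level} by replacing the objective $F$ with $\tilde F(x,y,u):=F(x,y,u)+\abs{x-\bar x}{2}^2$ and the set $X_\textup{ad}$ with the nonempty, compact set $\tilde X_\textup{ad}:=X_\textup{ad}\cap\{x\in\R^n\,|\,\abs{x-\bar x}{2}\le\delta\}\subset\R^n_+$, while keeping \eqref{eq:lower_level} and hence $\Psi$ unchanged.

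First I would verify that the auxiliary problem again meets the standing assumptions of \Cref{sec:stationarity_for_particular_problem}: $\tilde F$ is continuously Fréchet differentiable and convex (a sum of such functions), $\tilde X_\textup{ad}$ is nonempty, compact, and contained in $\R^n_+$, and everything pertaining to \eqref{eq:lower_level} is untouched. Moreover, $(\bar x,\bar y,\bar u)$ is the \emph{unique} global minimizer of the auxiliary problem, since for any feasible point $(x,\psi^y(x),\psi^u(x))$ with $x\in\tilde X_\textup{ad}$, $x\ne\bar x$, one has $\tilde F(x,\psi^y(x),\psi^u(x))=F(x,\psi^y(x),\psi^u(x))+\abs{x-\bar x}{2}^2\ge F(\bar x,\bar y,\bar u)+\abs{x-\bar x}{2}^2>\tilde F(\bar x,\bar y,\bar u)$ by the choice of $\delta$.

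Next I would rerun the relaxation machinery of \Cref{sec:upper_level} and the subsequent stationarity analysis for the auxiliary problem. The Whitney extension $\hat\varphi$ of \Cref{lem:whitney} still serves, since it agrees with $\varphi$ on $X_\textup{ad}\supset\tilde X_\textup{ad}$; \Cref{lem:existence_relaxation} produces global minimizers $(\bar x_k,\bar y_k,\bar u_k)$ of the associated relaxed problems \eqref{eq:OVR_relaxed}; and \Cref{thm:relaxation_behavior}, applied to the auxiliary problem whose global minimizer is unique, forces $(\bar x_k,\bar y_k,\bar u_k)\to(\bar x,\bar y,\bar u)$ along the whole sequence (every subsequence has a further subsequence converging to the unique global minimizer). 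Then \Cref{lem:convergence_of_multipliers,lem:CSt}, read verbatim with $F$ replaced by $\tilde F$, provide multipliers $\bar z\in\R^n$, $\bar\mu\in\YY$, $\bar\rho\in\WW^\star$, $\bar w,\bar\xi\in L^2(\Omega)$ (and the lower level multipliers $\bar p,\bar\lambda$) satisfying the W-stationarity system together with \eqref{eq:CSt_xi}, \eqref{eq:CSt_w}, and \eqref{eq:CSt_clarke} for the auxiliary problem at $(\bar x,\bar y,\bar u)$.

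Finally I would transfer these conditions back to \eqref{eq:upper_level}. Since $\tilde F$ and $F$ differ only by $\abs{x-\bar x}{2}^2$, we have $\tilde F'_y=F'_y$, $\tilde F'_u=F'_u$, and $\tilde F'_x(\bar x,\bar y,\bar u)=F'_x(\bar x,\bar y,\bar u)+2(\bar x-\bar x)=F'_x(\bar x,\bar y,\bar u)$; moreover $\bar x$ lies in the interior of the ball $\{\abs{x-\bar x}{2}\le\delta\}$, so $\tilde X_\textup{ad}$ coincides with $X_\textup{ad}$ near $\bar x$ and hence $\mathcal N_{\tilde X_\textup{ad}}(\bar x)=\mathcal N_{X_\textup{ad}}(\bar x)$. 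Therefore the entire system \eqref{eq:CSt_x}--\eqref{eq:CSt_clarke}, together with the lower level relations \eqref{eq:CSt_ll_y}--\eqref{eq:CSt_ll_lambda_b}, holds for \eqref{eq:upper_level} at $(\bar x,\bar y,\bar u)$, i.e.\ this point is C-stationary. The only genuinely delicate point is bookkeeping: checking that every hypothesis invoked by \Cref{thm:relaxation_behavior,lem:convergence_of_multipliers,lem:CSt} survives the passage to the auxiliary problem, and that the localization term drops out of the first-order system precisely at $\bar x$; both are immediate once written out.
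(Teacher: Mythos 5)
Your proposal is correct and follows essentially the same route as the paper: localize around $\bar x$ by adding the quadratic penalty $\abs{x-\bar x}{2}^2$ and intersecting $X_\textup{ad}$ with a small ball so that $(\bar x,\bar y,\bar u)$ becomes the unique global minimizer of an auxiliary problem of the same structure, apply \Cref{thm:relaxation_behavior} and \Cref{lem:convergence_of_multipliers,lem:CSt} to that problem, and observe that the penalty's derivative vanishes at $\bar x$ while $\mathcal N_{X_\textup{ad}\cap\mathbb B^\varepsilon(\bar x)}(\bar x)=\mathcal N_{X_\textup{ad}}(\bar x)$, so the C-stationarity systems coincide. Your extra bookkeeping (checking the standing assumptions for the auxiliary problem and using uniqueness of the global minimizer to identify the limit in \Cref{thm:relaxation_behavior}) only makes explicit what the paper leaves implicit.
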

\begin{proof}
	Invoking \Cref{lem:existence_and_uniqueness_lower_level}, there is some $\varepsilon>0$ such that
	$\bar x$ is the unique globally optimal solution of 
	\[
		\begin{split}
			F(x,\psi^y(x),\psi^u(x))+\tfrac12\abs{x-\bar x}{2}^2&\,\to\,\min\limits_x\\
			x&\,\in\,X_\textup{ad}\cap\mathbb B^\varepsilon(\bar x)
		\end{split}
	\]
	where $\mathbb B^\varepsilon(\bar x)$ denotes the closed $\varepsilon$-ball around $\bar x$.
	Thus, $(\bar x,\bar y,\bar u)$ is the unique global minimizer of the bilevel programming problem
	\begin{equation}\label{eq:upper_level_localized}
		\begin{split}
		F(x,y,u)+\tfrac12\abs{x-\bar x}{2}^2&\,\to\,\min\limits_{x,y,u}\\
		x&\,\in\,X_\textup{ad}\cap\mathbb B^\varepsilon(\bar x)\\
		(y,u)&\,\in\,\Psi(x).
		\end{split}
	\end{equation}
	Combining \Cref{thm:relaxation_behavior} as well as \Cref{lem:convergence_of_multipliers,lem:CSt},
	$(\bar x,\bar y,\bar u)$ is a C-stationary point of \eqref{eq:upper_level_localized}.
	Noting that the derivative of the functional $\R^n\ni x\mapsto\tfrac12\abs{x-\bar x}{2}^2\in\R$
	vanishes at $\bar x$ while 
	$\mathcal N_{X_\textup{ad}\cap\mathbb B^\varepsilon(\bar x)}(\bar x)=\mathcal N_{X_\textup{ad}}(\bar x)$ holds since
	$\bar x$ is an interior point of $\mathbb B^\varepsilon(\bar x)$, the C-stationarity conditions
	of \eqref{eq:upper_level_localized} and \eqref{eq:upper_level} coincide at $(\bar x,\bar y,\bar u)$.
	This completes the proof.
\end{proof}
 
\begin{remark}\label{rem:M_stationarity}
		The counterexample from \cite[Section~3.2]{HarderWachsmuth2019} shows that the local minimizers
		of \eqref{eq:upper_level} are not S-stationary in general.
		However, it remains an open question whether the multipliers which solve the C-stationarity 
		system associated with a local minimizer of \eqref{eq:upper_level} additionally satisfy
		\[
		\begin{aligned}
			&\bar\xi\bar w\,=\,0\,\lor\,(\bar\xi\,>\,0\,\land\,\bar w\,>\,0)&
				&\text{a.e.\ on }\{\omega\in\Omega\,|\,\bar\lambda(\omega)=0\,
									\land\,\bar u(\omega)=u_a(\omega)\},&\\
			&\bar\xi\bar w\,=\,0\,\lor\,(\bar\xi\,<\,0\,\land\,\bar w\,<\,0)&
				&\text{a.e.\ on }\{\omega\in\Omega\,|\,\bar\lambda(\omega)=0\,
									\land\,\bar u(\omega)=u_b(\omega)\}.&
		\end{aligned}
		\]
		In line with the terminology of finite-dimensional complementarity programming,
		the resulting stationarity condition may be referred to as the system of (pointwise)
		Mordukhovich-stationarity.
		We would like to briefly note that this system \emph{cannot} be obtained by computing the
		limiting normal cone to the set $\gph\mathcal N_{U_\textup{ad}}$ since the latter turns
		out to be uncomfortably large, see \cite{MehlitzWachsmuth2018}. More precisely, this
		strategy results in the W-stationarity system of \eqref{eq:upper_level} from \cref{def:C_stationarity}. 
		Additionally, one cannot rely on the
		limiting variational calculus in $L^2(\Omega)$ due to an inherent lack of so-called
		sequential normal compactness, see \cite{Mehlitz2018}.
		Taking into account the outstanding success of variational analysis in the
		finite-dimensional setting, these observations are quite unexpected.
\end{remark}
%
%\section*{Appendix}
%\addcontentsline{toc}{section}{Appendix}
%
%
\bibliographystyle{spmpsci}      
\bibliography{references}   

\end{document}